\documentclass{article}

\usepackage[final]{graphicx} 
\usepackage{subcaption}    
\usepackage{color} 
\usepackage{varioref}  
\usepackage{rotating}  

\usepackage{pdflscape}		
\usepackage{setspace}		
\usepackage{mathrsfs}			
\usepackage{mathtools}
\usepackage{verbatim}			

\usepackage[final]{listings} 
\usepackage[pdftex,hidelinks]{hyperref}
\usepackage{cite} 

\usepackage{tikz,ulem} 
\usetikzlibrary{calc}
\usetikzlibrary{patterns}
\usetikzlibrary{positioning}
\usetikzlibrary{decorations.markings}
\usetikzlibrary{decorations.pathmorphing}
\usetikzlibrary{arrows}


%
\usepackage{latexsym}
\usepackage{amsfonts}
\usepackage{amssymb}
\usepackage{amsmath}
\usepackage{amsthm}
\usepackage{bbm}
\usepackage{afterpage}
\usepackage{fancyhdr}
\usepackage{lipsum} 
\usepackage{hyperref}
\usepackage{amsfonts}
\usepackage{amsthm}
\usepackage{amsmath}
\usepackage{amscd}
\usepackage{amssymb}
%
%
%
%
\usepackage{amsbsy}
%
%

\usepackage{epsf,graphicx,amsmath,verbatim,epsfig,amssymb,bbm}
\usepackage{tikz-cd,tikz, MnSymbol, relsize,enumitem, dsfont}
\usepackage{enumitem}
\usepackage{amsthm}

\usepackage[all]{xy}

\usepackage{pgfplots, tikz}

\usepackage{pgfplots}
\usepackage{tkz-euclide}
\usepgfplotslibrary{fillbetween}

\pgfplotsset{compat=1.6}

\pgfplotsset{soldot/.style={color=blue,only marks,mark=*}} \pgfplotsset{holdot/.style={color=blue,fill=white,only marks,mark=*}}


\lstset{
	language=Matlab, 
	basicstyle=\small, 
	identifierstyle=\color{red}, 
	stringstyle=\ttfamily, 
	keywordstyle=\color{blue}\bfseries, 
	commentstyle=\color{black}\itshape, 
	breaklines=true,  
	breakatwhitespace=false,   
}



\newtheorem{theorem}{Theorem}[section]

\newtheorem{lemma}[theorem]{Lemma}
\newtheorem{corollary}[theorem]{Corollary}

\theoremstyle{definition}
\newtheorem{definition}[theorem]{Definition}

\newtheorem{convention}[theorem]{Convention}

\theoremstyle{remark}

\newcommand{\p}{\varphi}
\newcommand{\RR}{{\rm I\kern -1.6pt{\rm R}}}

\def\ds{\displaystyle}

\def\deq{:=}

\def\r{\widehat}
\def\R{\mathbb{R}}

\def\S{\mathbb{S}}

\def\id{\mathbbm{1}}


\def\H{\mathbb{H}}

\newcommand{\D}{\mathbb{D}}

\DeclareMathOperator{\Ima}{Im}

\def\H{\mathbb{H}}

\newcommand{\pilip}[1]{\pi_{#1}^{\text{Lip}}}

\newcommand{\length}[1]{\ell}

\newcommand{\ps}[2]{\mathcal{P}_{#1}}
\def\dps{d_{\mathcal{P}}}


\def\NN{\mathbb N}
\def\RR{\mathbb R}

\newcommand{\lip}[1]{\text{Lip}(#1)}

\def\lmin{\ell_{\text{min}}}

\newcommand{\core}[1]{{#1}_\infty}

\DeclareMathOperator{\Lip}{Lip}

\newcommand\blfootnote[1]{%
  \begingroup
  \renewcommand\thefootnote{}\footnote{#1}%
  \addtocounter{footnote}{-1}%
  \endgroup
}

\begin{document}

\title{The universal Lipschitz path space of the Heisenberg group $\H^1$}
\author{Daniel Perry}
{\let\newpage\relax\maketitle}

  \begin{abstract}

The goal of this paper is to define and inspect a metric version of the universal path space and study its application to purely 2-unrectifiable spaces, in particular the Heisenberg group $\H^1$. The construction of the universal Lipschitz path space, as the metric version is called, echoes the construction of the universal cover for path-connected, locally path-connected, and semilocally simply connected spaces. We prove that this universal Lipschitz path space of a purely 2-unrectifiable space, much like the universal cover, satisfies a unique lifting property, a universal property, and is Lipschitz simply connected. The existence of such a universal Lipschitz path space of $\H^1$ will be used to prove that $\pilip{1}(\H^1)$ is torsion-free in a subsequent paper. 

 \blfootnote{{\it Key words and phrases.} Heisenberg group, contact manifolds, unrectifiability, geometric measure theory, sub-Riemannian manifolds, universal path space, unique lifting property \\ {\bf Mathematical Reviews subject classification.} Primary: 53C17, 28A75 ; Secondary: 57K33, 54E35 \\ {\bf Acknowledgments.} This material is based upon work supported by the National Science Foundation under Grant Number DMS 1641020.  The author was also supported by NSF awards 1507704 and 1812055 and ARAF awards in 2022 and 2023.}
  \end{abstract}


\normalem

\section{Introduction}


In this paper, we define the universal Lipschitz path space of a based metric space and prove that for a based purely 2-unrectifiable space the universal Lipschitz path space, much like the universal cover, satisfies a unique lifting property and is simply connected, a result we state presently. A key application of the following main result is for the based metric space $(\H^1,0)$, which is the Heisenberg group based at the origin.

\begin{theorem}\label{main}
For a based purely 2-unrectifiable space $(M,p_0)$, there exists a based Lipschitz simply connected length space $(\ps{M}{d},[p_0])$ and a based Lipschitz map $\pi:(\ps{M}{d},[p_0])\rightarrow (M,p_0)$ that satisfies the unique lifting property. 
\end{theorem}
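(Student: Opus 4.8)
The plan is to mimic the classical construction of the universal cover, replacing paths by Lipschitz paths and replacing homotopy by Lipschitz homotopy, while using the purely 2-unrectifiable hypothesis to control what maps of disks can exist. First I would define the underlying set of $\ps{M}{d}$ to be the set of Lipschitz-homotopy classes (rel endpoints) of Lipschitz paths in $M$ starting at the basepoint $p_0$, with $[p_0]$ the class of the constant path; the map $\pi$ sends a class $[\gamma]$ to its terminal point $\gamma(1)$. The basepoint and Lipschitz structure force me to work with paths up to reparametrization so that concatenation is well-defined and so that representatives can be taken to be constant-speed (or at least uniformly Lipschitz), which is the metric analogue of the homotopy-rel-endpoints equivalence. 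The purely 2-unrectifiable hypothesis should enter precisely in guaranteeing that the notion of Lipschitz homotopy is rigid enough: in such a space every Lipschitz map of a $2$-disk is highly degenerate (its image is $\mathcal{H}^2$-null), so Lipschitz loops bound in a controlled way and the equivalence classes behave like a discrete fiber over each point.

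Next I would put a metric on $\ps{M}{d}$. The natural candidate is a path/length metric: define $\dps([\gamma],[\eta])$ via infima of lengths of Lipschitz paths in $\ps{M}{d}$ connecting the two classes, where a path in $\ps{M}{d}$ is induced by a Lipschitz homotopy of paths in $M$ (equivalently, by sliding the terminal point along a Lipschitz path and concatenating). I expect one can show directly that $\dps$ is a genuine metric (nondegeneracy uses that $\pi$ is distance-nonincreasing downstairs together with the rel-endpoint structure) and that $(\ps{M}{d},\dps)$ is a length space by construction. With this metric, $\pi$ is $1$-Lipschitz because moving the terminal point of a representative by a short path changes the endpoint in $M$ by at most that length.

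Then I would verify the three asserted properties in turn. For the unique lifting property, given a Lipschitz map from a suitable model space into $M$ and a lift of a basepoint, I would lift by the standard ``lift each path by prepending its trace'' recipe: a Lipschitz path in $M$ based appropriately lifts to the path $t \mapsto [\gamma|_{[0,t]}]$ in $\ps{M}{d}$, and I would check this is Lipschitz with controlled constant; uniqueness follows from connectedness of the domain together with the fact that two lifts agreeing at one point agree everywhere, which is where the rigidity of Lipschitz homotopy classes (again via pure $2$-unrectifiability) is used to rule out monodromy. For Lipschitz simple connectedness, I would show every Lipschitz loop in $\ps{M}{d}$ is Lipschitz-null-homotopic: such a loop projects to a Lipschitz loop in $M$ whose class must be trivial in order for the lift to close up at $[p_0]$, and then I would push a null-homotopy upstairs, using pure $2$-unrectifiability to ensure the required filling disk exists as a Lipschitz map.

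The main obstacle I anticipate is analytic rather than formal: establishing that $\dps$ is a genuine (finite, nondegenerate) metric and that lifts are \emph{Lipschitz} with quantitatively controlled constants, rather than merely continuous. In the classical topological setting the lifting is automatic from covering-space theory, but here I must produce honest Lipschitz estimates for the lift $t \mapsto [\gamma|_{[0,t]}]$ and show that the length metric does not degenerate when paths are reparametrized or when the $\mathcal{H}^2$-null images of disks are exploited. I expect the purely 2-unrectifiable hypothesis to do the heavy lifting here: it should guarantee both that distinct classes stay separated (so $\dps$ is nondegenerate) and that enough Lipschitz homotopies exist to make the fiber of $\pi$ discrete and the total space simply connected, so the crux is packaging that hypothesis into the two quantitative statements ``lifts are Lipschitz'' and ``$\dps$ is a length metric.''
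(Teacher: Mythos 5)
Your construction matches the paper's skeleton — the same underlying set of homotopy classes of based Lipschitz paths, an equivalent metric (your ``slide the endpoint and concatenate'' description recovers the paper's $\dps([\gamma],[\eta])=\inf\{\,\text{length of }\beta : \beta\simeq\overline{\gamma}*\eta\,\}$), and the same path lift $t\mapsto[\gamma|_{[0,t]}]$. But your argument for \emph{uniqueness} of lifts has a genuine gap, and that step is the crux of the whole theorem. You propose the classical covering-space mechanism: fibers are discrete, so the set where two lifts agree is open and closed, ruling out monodromy. The fibers of $\pi$ are \emph{not} discrete for purely 2-unrectifiable spaces such as $\H^1$; indeed the paper proves (Theorem~\ref{no regular points}) that contact 3-manifolds are \emph{nowhere} Lipschitz semilocally simply connected, which is exactly why no covering space of $\H^1$ exists and why a different construction is needed at all. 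Concretely, the dilations $\delta_\lambda$ of $\H^1$ are homotheties of the Carnot--Carath\'eodory metric and biLipschitz automorphisms, so composing a non-nullhomotopic loop at $0$ with $\delta_\lambda$ yields non-trivial classes in $\pi^{-1}(0)=\pilip{1}(\H^1,0)$ whose representatives have length $\lambda\cdot(\text{const})\to 0$. Thus distinct points of a fiber are at positive distance from one another (this is what makes $\dps$ a metric), but no point of any fiber is isolated, so the open--closed argument cannot even get started. Pure 2-unrectifiability does not ``make the fiber of $\pi$ discrete''; it does something else entirely.

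What it actually supplies, via the factorization of Lipschitz homotopies through metric trees (Wenger--Young, packaged in the paper as the \emph{tree-like homotopy space} property, Definition~\ref{def TLH}), is Theorem~\ref{core result}: every homotopy class contains a length-minimizing representative whose image lies inside the image of \emph{every} representative, with controlled Lipschitz constants and homotopies. This yields trivial local representation (hence the metric and length-space structure, Corollary~\ref{p2u has metric on path space}), and it powers the real proof of unique path lifting (Theorem~\ref{unique path lifting}): given an arbitrary lift $\Gamma$ of $\gamma$ with $\Lip(\Gamma)=L$, one builds over dyadic partitions representatives $\beta_n\in\Gamma(1)$ that are length minimizers on each dyadic subinterval, proves $\Lip(\beta_n)<2L$, $\beta_n\to\gamma$ pointwise, and $\Ima(\beta_n)\subset\Ima(\gamma)$, and then applies Arzel\`a--Ascoli to a sequence of controlled homotopies to conclude $\gamma\in\Gamma(1)$, i.e.\ $\Gamma=\r{\gamma}$. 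Your proposal contains no substitute for this argument; the remark that disk images are $\mathcal{H}^2$-null is not a mechanism for showing two lifts agree. The remaining steps you outline (existence of lifts for maps from quasi-convex domains with trivial induced homomorphism, and simple connectedness by lifting a null-homotopy over the quasi-convex, Lipschitz simply connected disk $\D^2$) do go through essentially as in the paper (Theorems~\ref{unique lifting} and~\ref{upl implies simply connected}), but only once unique path lifting is established, so as written the proposal does not reach the theorem.
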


The space $\ps{M}{d}$ is called the \textbf{universal Lipschitz path space} of $M$. For the definition of the unique lifting property, see Definition~\ref{unique path lifting definition}. Indeed, we prove a more general version of Theorem~\ref{main}, where purely 2-unrectifiable space is replaced by tree-like homotopy space, which is defined in Definition~\ref{def TLH}. See Corollary~\ref{unique path lifting yields simply connected}. 

Moreover, for any metric space, the universal Lipschitz path space exists and is a pseudo-metric space, though it need not satisfy the unique lifting property. Provided that each point in the metric space supports only trivial local representation (see Definition~\ref{nonsingular def}), the universal Lipschitz path space is a metric space (Lemma~\ref{universal path space is a metric space}) and in fact a length space (Theorem~\ref{lem:length}).

Additionally, for a based metric space $(M,p_0)$, the definition of the pseudo-metric on the universal Lipschitz path space yields a pseudo-metric on the associated first Lipschitz homotopy group $\pilip{1}(M,p_0)$ (Definition~\ref{def ulps}), which is in fact a metric under a fairly mild assumption that the base point $p_0$ supports only trivial local representation. Thus, as the first Lipschitz homotopy group acts isometrically on $\ps{M}{d}$ (Theorem~\ref{free and isometric action}), under the mild assumption, $\pilip{1}(M,p_0)$ is a metric group. See Corollary~\ref{metric group}. This approach is relatively straightforward compared to efforts to topologize the standard fundamental group which can be quite involved; see for instance \cite{brazas2013fundamental}.

Covering space theory is an integral tool of algebraic topology. See any introductory text to algebraic topology, for instance \cite[Section 1.3]{hatcher2002algebraic}, for details. With its success in algebraic topology, the theory has been adjusted and imported into other fields with the aim of studying spaces that fail to satisfy the necessary semilocally simply connected property for the existence of a universal covering space (see `The Classification of Covering Spaces' in \cite{hatcher2002algebraic}). This paper represents such an effort, importing the ideas into metric topology. The universal Lipschitz path space is in fact a generalization of the universal covering space as they agree for locally Lipschitz simply connected spaces, such as Riemannian manifolds, simplicial complexes with a simplexwise linear metric, and quotients of Lipschitz simply connected Carnot groups.

Other efforts to adjust the universal covering space to instances where the base space is not assumed to be semilocally simply connected include \cite{berestovskiui2010covering} and \cite{Bog}. Berestovski\u{\i} and Plaut's work in \cite{berestovskiui2010covering} is of particular note as the aims of that paper are similar to the goals of establishing the universal Lipschitz path space. In \cite{berestovskiui2010covering}, the covering $\R$-tree of a metric space $M$ is defined in terms of ``non-backtracking'' rectifiable paths and is shown to be a metric tree that satisfies a unique lifting property and a universal property provided that $M$ is a length space. In this paper, the universal Lipschitz path space of a metric space $M$ is defined in terms of homotopy classes of Lipschitz paths and is shown to satisfy a unique lifting property and a universal property provided that $M$ is purely 2-unrectifiable. In the sequel to this paper, the universal Lipschitz path space is shown to be a metric tree given this assumption. Exploring the relationship between covering $\R$-trees and universal Lipschitz path spaces is the subject of ongoing research.

As the development and classification of covering spaces in algebraic topology is a tool for studying fundamental groups, the universal Lipschitz path space is a tool for studying the first Lipschitz homotopy group $\pilip{1}(M,p_0)$ of a based metric space $(M,p_0)$. Lipschitz homotopy groups are defined similarly to homotopy groups in algebraic topology with the additional requirement that all maps are Lipschitz rather than just continuous. See \cite[Definition~ 4.1]{Dej} for the original definition or \cite[Definition~28]{perry2020lipschitz} for an equivalent definition. 

We are primarily interested in $\pilip{1}(\H^1,0)$, the first Lipschitz homotopy group of the Heisenberg group $\H^1$ with the origin as the base point. Though the group $\pilip{1}(\H^1,0)$ is known to be uncountably generated \cite[Theorem~4.11(2)]{Dej}, there are interesting questions about the structure of the group which are still open. An example of such an open question was posed by Haj\l asz in \cite{lasz2018n+}: Is $\pilip{1}(\H^1,0)$ torsion-free? The tools developed in this paper will be used  to answer this question in the affirmative in the sequel to this paper. 

Alas, the analogy with covering space theory has limitations. Though we show there exists a universal Lipschitz path space of $\H^1$ that is Lipschitz simply connected and satisfies the unique lifting property, there is no Lipschitz simply connected covering space of $\H^1$, which is indicated presently. Indeed, as is the case in covering space theory, a metric space has to satisfy a metric notion of semilocally simply connectedness in order to have a Lipschitz simply connected covering space. 

\begin{definition}
A metric space $M$ is \emph{Lipschitz semilocally simply connected} if for any point $p\in M$, there exists an open neighborhood $U\subset M$ of the point $p$ such that the homomorphism induced by inclusion $U\hookrightarrow M$ between first Lipschitz homotopy groups based at $p$,
\[
\pilip{1}(U,p)\longrightarrow\pilip{1}(M,p),
\]
is the trivial map. 
\end{definition}

As will be shown, $\H^1$ is nowhere Lipschitz semilocally simply connected. Thus, there is no Lipschitz simply connected covering space of $\H^1$ and so no notion of a universal Lipschitz covering space of $\H^1$. 

Rather than focus solely on $\H^1$ here, we prove a more general result: contact 3-manifolds are nowhere Lipschitz semilocally simply connected. The Heisenberg group is the prototypical contact 3-manifold in that a contact 3-manifold $(M,\xi)$, when endowed with a sub-Riemannian metric, is a metric space that is locally modeled by the Heisenberg group, as is presently stated.
\begin{theorem}[BiLipschitz Theorem of Darboux, Corollary 23 in \cite{perry2020lipschitz}]\label{Darboux}
Let $(M,\xi)$ be a contact $3$-manifold endowed with a sub-Riemannian metric. For every $p\in M$, 
there exists a locally biLipschitz open distributional embedding $\p:\H^1\hookrightarrow M$ such that $\p(0)=p$. 
\end{theorem}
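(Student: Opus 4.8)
The plan is to promote the classical smooth Darboux theorem to a metric statement by coupling the contact normal form with a compactness estimate on the differential restricted to the horizontal distribution. Write $\cH$ for the standard horizontal distribution on $\H^1$, let $|\cdot|$ denote the sub-Riemannian norms on $\cH$ and on $\xi$ that induce the \cc metrics $\dcc{\H^1}$ and $\dcc{M}$, and let $\ell$ denote sub-Riemannian length.

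First I would invoke the classical Darboux theorem for contact $3$-manifolds: every $p\in M$ has an open neighborhood $W$ and a contactomorphism $\psi$ from an open neighborhood $V$ of $0\in\H^1$ onto $W$ with $\psi(0)=p$ and $d\psi(\cH)=\xi$. To realize a map whose domain is all of $\H^1$, as the statement requires, I would precompose $\psi$ with a smooth contactomorphism $h\colon\H^1\to V'$ onto an open subset $V'\subseteq V$; the existence of such a ``contact shrinking'' of standard contact $\R^3$ onto a bounded Darboux domain (built, for instance, by composing base point transformations, Legendre transforms, $z$-shears, and a dilation) is a standard fact of contact topology. Setting $\p=\psi\circ h$ then produces a smooth open embedding $\p\colon\H^1\hookrightarrow M$ with $\p(0)=p$ whose differential carries $\cH$ into $\xi$, which is exactly the desired open distributional embedding. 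It remains only to check that $\p$ is locally biLipschitz for the two \cc metrics.

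For the metric estimate the decisive point is that $\p$ sends horizontal curves to horizontal curves. Fix $q\in\H^1$ together with a compact neighborhood $K$ of $q$. Since $d\p$ restricts to a smooth bundle isomorphism $\cH|_K\to\xi$ over the compact set $K$, its dilatation is bounded above and below: there are constants $0<c\le C$ with $c\,|v|\le|d\p(v)|\le C\,|v|$ for every horizontal $v$ based in $K$. Hence $c\,\ell(\gamma)\le\ell(\p\circ\gamma)\le C\,\ell(\gamma)$ for every horizontal curve $\gamma$ contained in $K$. Passing to infima over horizontal curves, and running the same argument for the smooth inverse $\p^{-1}$ on the compact set $\p(K)$ to obtain the reverse comparison, I expect to produce, on a sufficiently small neighborhood $V_q\subseteq K$ of $q$, the two-sided bound
\[
c\,\dcc{\H^1}(q_1,q_2)\le\dcc{M}\bigl(\p(q_1),\p(q_2)\bigr)\le C\,\dcc{\H^1}(q_1,q_2),\qquad q_1,q_2\in V_q,
\]
which says that $\p$ is biLipschitz on $V_q$ and therefore locally biLipschitz.

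The step I expect to be the main obstacle is exactly this passage from the pointwise length comparison to the comparison of \cc distances, since the latter are infima over all horizontal curves and not merely over those confined to $K$. The difficulty is to guarantee that near-minimizing horizontal curves joining two nearby points do not leave the compact chart on which the uniform bounds $c$ and $C$ hold, and, for the lower inequality, that the analogous confinement holds in $M$ for curves joining $\p(q_1)$ to $\p(q_2)$. This calls on the local structure of the \cc metric---local compactness, the fact that $\dcc{}$ induces the manifold topology, and that short horizontal geodesics between nearby points stay inside small \cc balls---so that shrinking $V_q$ forces all the relevant competitor curves into $K$. Once this localization is secured, the two-sided length estimate transfers directly to the two-sided distance estimate and the theorem follows.
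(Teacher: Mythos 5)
This theorem is not proved in the paper you were given: it is imported verbatim from \cite{perry2020lipschitz} (Corollary 23 there), so there is no internal proof to compare against. Judged on its own merits, your outline has the right architecture for this result: classical Darboux normal form near $p$; then a contact embedding of \emph{all} of $\H^1$ into the Darboux chart so that the domain is the whole Heisenberg group; then a compactness argument showing that a smooth open embedding carrying $\cH$ onto $\xi$ is locally biLipschitz for the two \cc metrics. Your treatment of the one delicate analytic point --- confining near-minimizing horizontal curves to the compact set on which the dilatation bounds $c,C$ hold, using that the \cc metric induces the manifold topology (Chow--Rashevskii, which applies on $M$ because a contact distribution is bracket generating) --- is the standard and correct resolution of that issue.

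The genuine gap is the step you wave through as ``a standard fact of contact topology'': the contact shrinking $h:\H^1\to V'\subseteq V$. The construction you propose in parentheses cannot work. Base point transformations (Heisenberg left translations), Legendre transforms, $z$-shears, and dilations are each contactomorphisms of $\R^3$ \emph{onto} $\R^3$; any finite composition of them is therefore a bijection of $\R^3$ and can never have bounded image, let alone image inside $V$. The existence of a contact embedding of all of $(\R^3,\xi_{\mathrm{std}})$ onto a bounded open subset is true but genuinely nontrivial: the standard justification is Bennequin's theorem (the standard structure is tight) together with Eliashberg's classification of tight contact structures on $\R^3$, which imply that the open unit ball with the induced standard structure --- being tight and diffeomorphic to $\R^3$ --- is contactomorphic to $(\R^3,\xi_{\mathrm{std}})$; one then shrinks that ball into $V$ with a Heisenberg dilation. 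Since producing an embedding defined on all of $\H^1$, rather than on an unspecified neighborhood of $0$, is precisely what separates this statement from the classical Darboux theorem, this step is the crux and must be supported by such a citation or argument. Two smaller repairs: you need $h(0)=0$ (or must restore the base point afterwards by a Heisenberg translation followed by a further dilation back into $V$) so that $\p(0)=\psi(h(0))=p$; and in the two-sided estimate you should say explicitly that $d\p$ maps $\cH$ \emph{onto} $\xi$ over the image, since the lower bound requires $\p^{-1}$ to send horizontal curves to horizontal curves.
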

As such, since $\H^1$ is purely 2-unrectifiable \cite[Theorem~7.2]{Amb}, any contact 3-manifold is purely 2-unrectifiable as well \cite[Theorem~27]{perry2020lipschitz}. 
For a detailed account of contact 3-manifolds as sub-Riemannian manifolds, as well as some results involving their Lipschitz homotopy groups, see \cite{perry2020lipschitz}.

Before proving contact 3-manifolds are nowhere Lipschitz semilocally simply connected, we prove a more general version of Theorem 32 in \cite{perry2020lipschitz}.

\begin{lemma}\label{injective pi-1}
Let $(X,x_0)$ and $(Y,y_0)$ be based metric spaces where $Y$ is purely 2-unrectifiable. Let $\p:(X,x_0)\hookrightarrow(Y,y_0)$ be a based biLipschitz embedding. Then the homomorphism induced by $\p$ between the first Lipschitz homotopy groups
\[
\p_{\#}:\pilip{1}(X,x_0)\longrightarrow\pilip{1}(Y,y_0)
\]
is injective.
\end{lemma}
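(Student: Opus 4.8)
The plan is to show that the kernel of $\varphi_{\#}$ is trivial. I would start with a Lipschitz loop $\gamma$ in $X$ based at $x_0$ representing a class with $\varphi_{\#}[\gamma]=[\mathrm{const}_{y_0}]$; unwinding the definition of $\pilip{1}$, this says precisely that $\varphi\circ\gamma$ is Lipschitz null-homotopic in $Y$, i.e. there is a Lipschitz map $H:D^2\to Y$ with $H|_{\partial D^2}=\varphi\circ\gamma$. The goal is to produce a Lipschitz null-homotopy of $\gamma$ itself inside $X$. Since $\varphi$ is a based biLipschitz embedding it has a Lipschitz inverse $\psi=\varphi^{-1}:\varphi(X)\to X$ on its image, with $\psi(y_0)=x_0$; the naive candidate $\psi\circ H$ restricts correctly to $\partial D^2$ (giving back $\gamma$) but is only defined on $H^{-1}(\varphi(X))$, so the entire difficulty is the part of the disk that $H$ pushes outside $\varphi(X)$.

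This is exactly where pure $2$-unrectifiability of $Y$ enters. The image $H(D^2)$ is a Lipschitz image of a subset of $\R^2$, hence $2$-rectifiable, so $\hm{2}(H(D^2))=0$; the content of the tree-like-homotopy machinery (the same mechanism underlying Theorem~32 of \cite{perry2020lipschitz}) is that such an area-collapsing Lipschitz disk can be replaced, rel boundary, by one that factors through a tree. Concretely I would obtain a tree $T$, a Lipschitz map $g:T\to Y$, and a Lipschitz map $r:D^2\to T$ with $g\circ r|_{\partial D^2}=\varphi\circ\gamma$. The crucial observation is that although $g$ may send parts of $T$ outside $\varphi(X)$, the subtree $T_0:=r(\partial D^2)$ satisfies $g(T_0)=(\varphi\circ\gamma)(\partial D^2)\subseteq\varphi(X)$, so $\psi\circ g|_{T_0}:T_0\to X$ is a well-defined Lipschitz map. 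Setting $c:=r|_{\partial D^2}:S^1\to T_0$, we get the factorization $\gamma=(\psi\circ g|_{T_0})\circ c$, exhibiting $\gamma$ as the image under a Lipschitz map of a loop in a tree.

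To finish, I would contract in the tree. A metric tree is Lipschitz contractible: geodesic retraction toward $c(s_0)$ contracts $c$ to the constant loop at $c(s_0)$ rel $s_0$ through a jointly Lipschitz homotopy, which packages as a Lipschitz map $R:D^2\to T_0$ with $R|_{\partial D^2}=c$, the joint Lipschitz regularity following from the $\mathrm{CAT}(0)$ geodesic convexity of trees (and $T_0$ is itself a subtree, being a connected compact subset of $T$, so $R$ stays inside $T_0$). Then $\twilde{H}:=(\psi\circ g|_{T_0})\circ R:D^2\to X$ is a Lipschitz map with $\twilde{H}|_{\partial D^2}=\gamma$ that keeps $x_0$ fixed, i.e. a based Lipschitz null-homotopy of $\gamma$ in $X$. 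Hence $[\gamma]$ is trivial in $\pilip{1}(X,x_0)$ and $\varphi_{\#}$ is injective.

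The main obstacle is the middle step: rigorously producing the tree factorization of the null-homotopy $H$ from pure $2$-unrectifiability (equivalently, verifying that $Y$ is a tree-like homotopy space and that the factorization can be arranged rel boundary so that the boundary subtree lands in $\varphi(X)$). The remaining ingredients — the Lipschitz inverse on the image and the Lipschitz contractibility of trees — are routine, though the $\Lip$-regularity of the geodesic contraction and the bookkeeping of the basepoint should be checked carefully.
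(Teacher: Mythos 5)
Your proposal is correct and follows the same overall skeleton as the paper's proof --- take a kernel element, fill $\p\circ\gamma$ with a Lipschitz disk $H$ in $Y$, use pure 2-unrectifiability to make the filling compatible with $\Ima(\p)$, and push back into $X$ with the Lipschitz inverse $\p^{-1}$ --- but you execute the middle step by a different key lemma. The paper black-boxes it: by Lemma 31 of \cite{perry2020lipschitz}, the filling $H$ can be re-chosen so that $\Ima(H)\subset\Ima(\p\circ\gamma)\subset\Ima(\p)$, after which $\p^{-1}\circ H$ is immediately a Lipschitz filling of $\gamma$ defined on all of $\D^2$ and the proof ends. You instead re-derive that content from the Wenger--Young factorization \cite[Theorem 5]{Weg} (the very result this paper credits in Section 4 as the mechanism behind its tree-like homotopy machinery): factor $H=g\circ r$ through a metric tree, observe that the subtree $T_0=r(\partial\D^2)$ satisfies $g(T_0)=\Ima(\p\circ\gamma)\subset\Ima(\p)$, cone the loop $r|_{\partial\D^2}$ off inside $T_0$ by geodesic contraction, and compose with $\p^{-1}\circ g|_{T_0}$. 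Your variant buys two things: it is self-contained modulo Wenger--Young, and your observation that only the boundary subtree $T_0$, rather than the whole factored disk, needs to land in $\p(X)$ is a genuine economy (it avoids retracting $T$ onto $T_0$, which is essentially how the image-containment lemma is obtained). The cost is that you must supply the tree facts yourself --- compact connected subsets of $\R$-trees are subtrees, and geodesic coning over a Lipschitz loop in a bounded tree is jointly Lipschitz --- and the step you flag as the main obstacle, the rigorous tree factorization, is exactly what the paper's single citation packages. Both arguments terminate with the same (stronger than needed) conclusion: a Lipschitz null-homotopy of $\gamma$ in $X$ whose image lies in $\Ima(\gamma)$.
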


\begin{proof}

Let $\alpha:\S^1\rightarrow X$ be a Lipschitz map that represents an element of the kernel of the homomorphism $\p_\#$. So, there exists a Lipschitz map $H:\D^2\rightarrow Y$ such that $H$ restricted to the boundary is the Lipschitz map $\p\circ\alpha$:
\[
H|_{\partial\D^2}=\p\circ\alpha.
\]

As $Y$ is purely 2-unrectifiable, by Lemma 31 in \cite{perry2020lipschitz}, the Lipschitz map $H$ can be taken such that the image of $H$ is contained in the image of the Lipschitz map $\p\circ\alpha$. Thus, $H$ takes image entirely in the image of $\p$:
\[
\Ima(H)\subset\Ima(\p\circ\alpha)\subset\Ima(\p).
\]
Since the inverse $\varphi^{-1}:\Ima{\p}\rightarrow X$ is Lipschitz, the map given by composition
\[
\varphi^{-1}\circ H:\D^2\longrightarrow X
\]
is Lipschitz. The Lipschitz map $\varphi^{-1}\circ H$ when restricted to the boundary of $\D^2$ equals the map $\alpha$. Thus, $\alpha$ is Lipschitz null homotopic. Therefore, the only element in the kernel of $\varphi_\#$ is the trivial homotopy class.

\end{proof}

\begin{theorem}\label{no regular points}
Any contact 3-manifold $(M,\xi)$ endowed with a sub-Riemannian metric is nowhere Lipschitz semilocally simply connected.
\end{theorem}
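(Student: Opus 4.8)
The plan is to prove the contrapositive of local triviality at every point: I fix an arbitrary point $p\in M$ and an arbitrary open neighborhood $U$ of $p$, and then exhibit a single class in $\pilip{1}(U,p)$ whose image under the inclusion-induced homomorphism $\pilip{1}(U,p)\to\pilip{1}(M,p)$ is nontrivial. Producing such a class shows that the inclusion map is not trivial, so $M$ fails to be Lipschitz semilocally simply connected at $p$; since $p$ is arbitrary, $M$ is nowhere Lipschitz semilocally simply connected.

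First I would invoke Theorem~\ref{Darboux} to obtain a locally biLipschitz open distributional embedding $\p:\H^1\hookrightarrow M$ with $\p(0)=p$. Because $\p$ is continuous, $\p^{-1}(U)$ is an open neighborhood of $0$, and because $\p$ is locally biLipschitz there is a small Carnot--Carath\'eodory ball $B$ centered at $0$, contained in $\p^{-1}(U)$, on which $\p$ restricts to a based biLipschitz embedding $\p|_B:(B,0)\hookrightarrow(M,p)$. Since any contact $3$-manifold is purely 2-unrectifiable, Lemma~\ref{injective pi-1} applies with $X=B$ and $Y=M$, so $(\p|_B)_{\#}:\pilip{1}(B,0)\to\pilip{1}(M,p)$ is injective. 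Crucially $\p(B)\subset U$, so by functoriality this homomorphism factors through the inclusion as $\pilip{1}(B,0)\to\pilip{1}(U,p)\to\pilip{1}(M,p)$, the composite being injective. Hence it suffices to produce a single nontrivial element of $\pilip{1}(B,0)$: its image in $\pilip{1}(U,p)$ then maps to a nontrivial element of $\pilip{1}(M,p)$.

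To produce that element I would exploit the self-similarity of $\H^1$. Since $\pilip{1}(\H^1,0)$ is nontrivial (indeed uncountably generated, \cite[Theorem~4.11(2)]{Dej}), I choose a Lipschitz loop $\gamma$ representing a nontrivial class; its image is compact, hence contained in some ball $\bcc{}(0,R)$. The Heisenberg dilation $\delta_r$ fixes $0$ and is a biLipschitz homeomorphism of $\H^1$ scaling the metric by $r$, so for sufficiently small $r>0$ the loop $\delta_r\circ\gamma$ is based at $0$ and has image inside $B$. Because $\delta_r$ is a based biLipschitz homeomorphism, $(\delta_r)_{\#}$ is an automorphism of $\pilip{1}(\H^1,0)$, so $\delta_r\circ\gamma$ is still nontrivial in $\pilip{1}(\H^1,0)$. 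It is then nontrivial already in $\pilip{1}(B,0)$: a Lipschitz null-homotopy carried out inside $B$ would include into $\H^1$ and force $\delta_r\circ\gamma$ to be trivial there, a contradiction. This supplies the nontrivial class in $\pilip{1}(B,0)$ and completes the argument.

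I expect the main obstacle to be the two metric subtleties rather than the formal structure. The first is the careful extraction of a ball $B$ on which $\p$ is genuinely biLipschitz onto its image, so that Lemma~\ref{injective pi-1} legitimately applies; this is where the hypothesis ``locally biLipschitz'' in Theorem~\ref{Darboux} is used. The second, and more conceptual, is the observation that triviality in the subspace $B$ pushes \emph{forward} to triviality in the ambient $\H^1$ — the reverse of the usual caution that passing to a subspace can only destroy null-homotopies — which is exactly what lets a scaled-down nontrivial loop remain nontrivial after we confine attention to the small ball. Everything else is assembling Darboux's theorem, the injectivity lemma, and the nontriviality of $\pilip{1}(\H^1,0)$.
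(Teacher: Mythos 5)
Your proof is correct, and at the top level it assembles the same three ingredients as the paper: the biLipschitz Darboux theorem, Lemma~\ref{injective pi-1} (powered by pure 2-unrectifiability of contact 3-manifolds), and the nontriviality of $\pilip{1}(\H^1,0)$. But the execution is genuinely different. The paper applies Lemma~\ref{injective pi-1} twice: once to the inclusion $(U,p)\hookrightarrow(M,p)$, yielding injectivity of $\pilip{1}(U,p)\to\pilip{1}(M,p)$, and once to an asserted based biLipschitz embedding $(\H^1,0)\hookrightarrow(U,p)$, yielding nontriviality of $\pilip{1}(U,p)$; an injective homomorphism out of a nontrivial group is nontrivial. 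You instead apply the lemma only once, to the restriction $\p|_B:(B,0)\hookrightarrow(M,p)$ of the Darboux map to a small ball, manufacture an explicit nontrivial class in $\pilip{1}(B,0)$ by shrinking a nontrivial loop of $\H^1$ with Heisenberg dilations, and push that single class through the factorization $\pilip{1}(B,0)\to\pilip{1}(U,p)\to\pilip{1}(M,p)$. Your detour through $B$ and the dilations buys real rigor: Theorem~\ref{Darboux} only provides a \emph{locally} biLipschitz embedding, and indeed no globally biLipschitz copy of all of $\H^1$ can lie inside a bounded neighborhood $U$ (biLipschitz maps preserve unboundedness), so the paper's phrase ``based biLipschitz embedding $\p:(\H^1,0)\hookrightarrow(U,p)$'' requires exactly the kind of repair you carry out --- either restricting to a ball and rescaling loops, as you do, or rerunning the proof of Lemma~\ref{injective pi-1} for locally biLipschitz embeddings, using that a locally Lipschitz map on the compact, quasi-convex disk $\D^2$ is Lipschitz. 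What the paper's route buys in exchange is brevity and a slightly stronger structural fact recorded along the way: the inclusion-induced homomorphism $\pilip{1}(U,p)\to\pilip{1}(M,p)$ is injective, not merely nontrivial, a conclusion your argument never needs and never establishes.
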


\begin{proof}
Let $p\in M$ and take an open neighborhood $U\subset M$ of the point $p$. By \cite[Theorem~27]{perry2020lipschitz}, the contact 3-manifold $(M,\xi)$ is purely 2-unrectifiable.  As the inclusion map $(U,p)\hookrightarrow (M,p)$
is a based biLipschitz embedding, by Lemma~\ref{injective pi-1}, the homomorphism $\pilip{1}(U,p)\rightarrow\pilip{1}(M,p)$ induced by the inclusion between the first Lipschitz homotopy groups is injective. 

Via Theorem~\ref{Darboux}, there exists a based biLipschitz embedding $\p:(\H^1,0)\hookrightarrow (U,p)$ of the Heisenberg group into the neighborhood $U$ of $p$. So, by Lemma~\ref{injective pi-1}, the homomorphism $\p_\#:\pilip{1}(\H^1,0)\hookrightarrow\pilip{1}(U,p)$ is injective. Since the group $\pilip{1}(\H^1,0)$ is non-trivial \cite{Dej}, the group $\pilip{1}(U,p)$ is non-trivial as well. Thus, the homomorphism $\pilip{1}(U,p)\hookrightarrow\pilip{1}(M,p)$ is not the trivial map.

\end{proof}

The paper is organized as follows. In Section 2, we define the universal Lipschitz path space as a pseudo-metric space and discuss sufficient conditions for when it is a metric space. We also define the endpoint projection map associated with the universal Lipschitz path space and define a free and isometric action of the Lipschitz fundamental group of the original metric space on the universal Lipschitz path space before concluding that the first Lipschitz homotopy group is a metric group given mild assumptions. In Section 3, we discuss in general the unique lifting property, the unique path lifting property, Lipschitz simply connectedness, and the universal property for the universal Lipschitz path space. In Section 4, we define tree-like homotopy spaces, a generalization of pure 2-unrectifiability. We then prove that the unique path lifting property is satisfied for tree-like homotopy spaces before concluding that for such spaces, the unique lifting property is satisfied and the universal Lipschitz path space is Lipschitz simply connected.

\medskip
\noindent {\bf Acknowledgment}. The author wishes to thank Chris Gartland and Fedya Manin for their invaluable input on this paper. In addition to Gartland and Manin, the author would also like to thank David Ayala and Carl Olimb for their input, thoughts, and support as this paper came together.

\section{Definition of the universal Lipschitz path space}

\subsection{Background}

\begin{convention}
Throughout this paper, $I=[0,1]$ is the closed interval endowed with the Euclidean metric whose base point is $0\in I$. The product space $I\times I$ is endowed with the $L^1$ metric: for $(s,t),(s',t')\in I\times I$,
\[
d^1((s,t),(s',t'))=|s-s'|+|t-t'|,
\]
which is Lipschitz equivalent to the Euclidean metric on $I\times I$. For metric spaces $X$ and $Y$, the Lipschitz constant of a Lipschitz function $f:X\rightarrow Y$ is denoted by $\Lip(f)$. For a metric space $X$, the open ball centered at $x\in X$ of radius $r>0$ is denoted $B(x,r)$.
\end{convention}

Lipschitz paths will play a fundamental role in the definition and study of the universal Lipschitz path space. All paths and loops, unless stated otherwise, are Lipschitz and have domain $I$. For a metric space $M$ and a path $\gamma:[a,b]\rightarrow M$, the path $\gamma$ is said to \emph{join} initial point $\gamma(a)$ to the end point $\gamma(b)$. The \emph{length} of the path $\gamma$ is denoted $\length{M}(\gamma)$. 

We now present the definition of an arc length parametrized path. Note that for $\gamma:[a,b]\rightarrow M$, an arc length parametrized path, $\Lip(\gamma)=\frac{\length{M}(\gamma)}{b-a}$. 

\begin{definition}\label{arc length definition}
For a metric space $M$, a path $\gamma: [a,b]\rightarrow M$ is \emph{arc length parametrized} if for any $t_1,t_2\in[a,b]$,
\[
\length{M}\left(\left.\gamma\right|_{[t_1,t_2]}\right)=\frac{\length{M}(\gamma)}{b-a}~\left|t_1-t_2\right|.
\]
\end{definition}

For a path $\gamma$, the \emph{inverse path} is denoted $\overline{\gamma}$. For paths $\gamma_1$ and $\gamma_2$ such that the end point of $\gamma_1$ agrees with the initial point of $\gamma_2$, that is, $\gamma_1(1)=\gamma_2(0)$, the \emph{concatenation} is denoted $\gamma_1\ast\gamma_2$.

\begin{definition}\label{path classes}
For a metric space $M$, two paths $\gamma,\gamma':I\longrightarrow M$ are \emph{homotopic rel endpoints}, denoted $\gamma\simeq\gamma'$, if the initial points $\gamma(0)=\gamma'(0)$ and end points $\gamma(1)=\gamma'(1)$ of the paths agree and there exists a Lipschitz map $H:I\times I\rightarrow M$ such that
\[
H|_{I\times\{0\}}=\gamma,\hspace{.25cm} H|_{I\times\{1\}}=\gamma',\hspace{.25cm} H|_{\{0\}\times I}=\gamma(0),\hspace{.25cm}\text{and}\hspace{.25cm} H|_{\{1\}\times I}=\gamma(1).
\]
The map $H$ is a \emph{homotopy} from $\gamma$ to $\gamma'$. For a path $\gamma$, the class of all paths homotopic rel endpoints to $\gamma$ is denoted $[\gamma]_{\gamma(0)}^{\gamma(1)}$ and is referred to as the \emph{homotopy class} of $\gamma$. When it does not cause confusion, the endpoints will be dropped from the notation. For a homotopy class $[\gamma]$, a \emph{length minimizing path} $\core{\gamma}\in[\gamma]$ is a representative in the class such that
\[
\length{M}(\core{\gamma})=\inf\left\{~\length{M}(\gamma)~:~\gamma\in[\gamma]~\right\}.
\]
\end{definition}

%
%

The first Lipschitz homotopy group $\pilip{1}(M,p_0)$ of a based metric space $(M,p_0)$ is then the set of homotopy classes of loops $[\alpha]_{p_0}^{p_0}$ with group operation given by concatenation: $[\alpha_1]_{p_0}^{p_0}*[\alpha_2]_{p_0}^{p_0}\deq[\alpha_1*\alpha_2]_{p_0}^{p_0}$. This notion is equivalent to the definitions of first Lipschitz homotopy group which appear in \cite{Dej} and \cite{perry2020lipschitz}. Similar to fundamental groups, a Lipschitz map between based metric spaces induces a homomorphism between the associated first Lipschitz homotopy groups. 
\begin{definition}
For based metric spaces $(X,x_0)$ and $(Y,y_0)$ and a based Lipschitz  map $f:(X,x_0)\rightarrow(Y,y_0)$, the homomorphism induced by the map $f$ between first Lipschitz homotopy groups is
\begin{center}
$\begin{array}{rccl}
f_\#: & \pilip{1}(X,x_0) & \longrightarrow & \pilip{1}(Y,y_0) \\
     & [\alpha]  &   \longmapsto &   [f\circ\alpha]. 
\end{array}$
\end{center}
The map $f_\#$ is indeed well-defined and a homomorphism.
\end{definition}

Along with considering homotopy classes, we will often benefit from being able to relate the distance between points in a metric space with the lengths of paths between said points. We define a few types of metric spaces where such a relationship exists. Note that an immediate consequence of Definition~\ref{quasi convex} is that every quasi-convex space is Lipschitz path connected.

\begin{definition}\label{quasi convex}
A metric space $X$ with metric $d$ is \emph{quasi-convex} if there exists a real-value $C\geq 1$ such that for any pair of points $x_1,x_2\in X$, there exists a path $\gamma_X:I\rightarrow X$ where $\gamma_X(0)=x_1$ and $\gamma_X(1)=x_2$ such that
\[
\length{X}(\gamma_X)\leq C\cdot d(x_1,x_2).
\]
Such a value $C$ is called a \emph{quasi-convexity constant} of $X$. If $C$ can be taken to equal $1$, then $X$ is a \emph{geodesic space}. The metric space $X$ is a \emph{length space} if $C$ can be taken to be any value $C>1$, or equivalently, if for any $x_1,x_2\in X$,
\[
d(x_1,x_2)=\inf\left\{~\length{X}(\gamma)~:~\gamma:I\stackrel{\text{Lipschitz}}{\longrightarrow}X,~ \gamma(0)=x_1,~ \gamma(1)=x_2~\right\}.
\]
\end{definition}


\subsection{Metric structure on the universal path space}

For the remainder of this section, fix a based metric space $(M,p_0)$ with metric $d$. Additionally, since the first Lipschitz homotopy group and the universal Lipschitz path space only depend on the Lipschitz path component of the base point, assume that $M$ is Lipschitz path connected and has more than one element. We construct the universal Lipschitz path space $\ps{M}{d}$ for $(M,p_0)$, which is also a based space.

The definition of the universal Lipschitz path space as a set is similar to the set definition of the universal covering space from algebraic topology, in that elements of the universal Lipschitz path space are homotopy classes of paths, the meaningful distinction being that we are considering Lipschitz paths and homotopies rather than continuous paths and homotopies.

Moreover, we define a means of lifting the metric on the metric space to a pseudo-metric on the universal Lipschitz path space. The construction of the pseudo-metric, as well as several of the insights and results that follow, are inspired by the work of Bogley and Sieraski in \cite{Bog}.

\begin{definition}\label{def ulps}
The \emph{universal Lipschitz path space} $\ps{M}{d}$ of a based metric space $(M,p_0)$ as a set is
\[
\ps{M}{d}\deq\left\{~[\gamma]_{p_0}^{p}~:~\gamma:I\stackrel{\text{Lipschitz}}{\longrightarrow}M,~ \gamma(0)=p_0,~ \gamma(1)=p~\right\}.
\]
The base point of $\ps{M}{d}$ is $[p_0]$, the class of loops in $M$ based at $p_0$ that are Lipschitz null-homotopic. The \emph{lifted pseudo-metric} on the universal Lipschitz path space is defined to be, for $[\gamma_1],[\gamma_2]\in \ps{M}{d}$,
\[
\dps\left([\gamma_1],[\gamma_2]\right)\deq\inf\left\{~\length{M}(\beta)~:~\beta\simeq\overline{\gamma_1}*\gamma_2~\right\}.
\]
where the infimum is taken over all paths $\beta$ that are homotopic to $\overline{\gamma_1}*\gamma_2$ for some representatives $\gamma_1\in[\gamma_1]$ and $\gamma_2\in[\gamma_2]$.
\end{definition}

\begin{convention}
As we will only be discussing the universal Lipschitz path space with reference to metric spaces and Lipschitz maps, we will refer to the space $\ps{M}{d}$ simply as the universal path space.
\end{convention}

Note that for any based metric space $(M,p_0)$, the first Lipschitz homotopy group $\pilip{1}(M,p_0)\subset\ps{M}{d}$ is a subset of the universal path space $\ps{M}{d}$ as each element $[\alpha]\in\pilip{1}(M,p_0)$ is a homotopy class of paths whose endpoint agrees with its initial point $p_0$. Thus, Definition~\ref{def ulps} provides a pseudo-metric on the first Lipscthiz homotopy group when the lifted pseudo-metric $\dps$ is restricted to $\pilip{1}(M,p_0)\subset\ps{M}{d}$.



Now, the lifted pseudo-metric on the universal path space is not in general a metric. The harmonic archipelago is an example of a metric space whose lifted pseudo-metric on its universal path space is not a metric \cite[Example~1.1]{Bog}. For the harmonic archipelago, the lifted pseudo-metric fails to be a metric because of the existence of a point that supports a non-trivial homotopy class of loops that is locally representable (such points are called \emph{singular} in \cite{Bog}). 

\begin{definition}\label{nonsingular def}
For a metric space $M$ and a point $p\in M$, a homotopy class $[\alpha]\in\pilip{1}(M,p)$ is \emph{locally representable} if for every neighborhood $U\subset M$ of the point $p$, there exists a representative $\alpha_U\in[\alpha]$ such that the image of the representative $\Ima(\alpha_U)\subset U$ is a subset of $U$. If the only locally representable homotopy class for the point $p\in M$ is the trivial class $[p]$, then $p$ is said to \emph{support only trivial local representation}.
\end{definition}

As is indicated in the following lemma, the the lifted pseudo-metric when restricted to the first Lipschitz homotopy group is a metric when the base point supports only trivial local representation. Moreover, the lifted pseudo-metric is a metric on the universal path space as long as each point in the space supports only trivial local representation.


\begin{lemma}\label{universal path space is a metric space} 
Let $(M,p_0)$ be a based metric space. If the base point $p_0$ supports only trivial local representation, then the lifted pseudo-metric $\dps$ restricted to the first Lipschitz homotopy group $\pilip{1}(M,p_0)$ is a metric. Moreover, if each point in $M$ supports only trivial local representation, then the lifted pseudo-metric $\dps$ on the universal path space $\ps{M}{d}$ is a metric. In this case, $\dps$ is called the \emph{lifted metric}.
\end{lemma}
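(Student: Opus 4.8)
The plan is to verify the three metric axioms for $\dps$—nonnegativity/identity of indiscernibles, symmetry, and the triangle inequality—with the crucial work being the identity of indiscernibles, since the other two follow quickly from the definition. First I would observe that $\dps\geq 0$ is immediate since lengths are nonnegative, and that $\dps([\gamma],[\gamma])=0$ because $\overline{\gamma}*\gamma\simeq[\text{const}]$ admits arbitrarily short representatives (the constant path at $p_0$, or a backtracking path shrinking toward $p_0$). Symmetry follows from the fact that $\beta\simeq\overline{\gamma_1}*\gamma_2$ if and only if $\overline{\beta}\simeq\overline{\gamma_2}*\gamma_1$, together with $\length{M}(\beta)=\length{M}(\overline{\beta})$, so the two infima defining $\dps([\gamma_1],[\gamma_2])$ and $\dps([\gamma_2],[\gamma_1])$ range over the same set of lengths.

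For the triangle inequality, I would use concatenation of near-optimal competitors: given $[\gamma_1],[\gamma_2],[\gamma_3]$ and $\varepsilon>0$, choose $\beta_{12}\simeq\overline{\gamma_1}*\gamma_2$ and $\beta_{23}\simeq\overline{\gamma_2}*\gamma_3$ with lengths within $\varepsilon$ of $\dps([\gamma_1],[\gamma_2])$ and $\dps([\gamma_2],[\gamma_3])$ respectively. Then $\beta_{12}*\beta_{23}\simeq\overline{\gamma_1}*\gamma_2*\overline{\gamma_2}*\gamma_3\simeq\overline{\gamma_1}*\gamma_3$, and since length is additive under concatenation, this exhibits a competitor for $\dps([\gamma_1],[\gamma_3])$ of length at most $\dps([\gamma_1],[\gamma_2])+\dps([\gamma_2],[\gamma_3])+2\varepsilon$; letting $\varepsilon\to 0$ gives the inequality. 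A subtlety to flag is that the representatives $\gamma_2$ used in the two near-optimal competitors must be made to match so the concatenation is literally defined, but since we are working with homotopy classes and the endpoints agree, one can reparametrize or invoke that the relevant homotopies compose.

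The main obstacle, and the only place the hypothesis is used, is the reverse direction of identity of indiscernibles: showing that $\dps([\gamma_1],[\gamma_2])=0$ forces $[\gamma_1]=[\gamma_2]$. Here I would argue by contradiction. Suppose $[\gamma_1]\neq[\gamma_2]$ but the infimum is zero; then the loop class $[\alpha]\deq[\overline{\gamma_1}*\gamma_2]\in\pilip{1}(M,\gamma_1(1))$ is nontrivial, yet admits representatives of arbitrarily small length. The goal is to deduce from the existence of arbitrarily short representatives that $[\alpha]$ is \emph{locally representable} at the common endpoint $p\deq\gamma_1(1)=\gamma_2(1)$, contradicting the hypothesis that $p$ supports only trivial local representation. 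The key quantitative step is that a loop $\beta$ based at $p$ of length $\ell$ has image contained in the closed ball $\overline{B}(p,\ell/2)$ (every point on $\beta$ is within $\ell/2$ of $p$ along the loop, hence within $\ell/2$ in the metric), so given any neighborhood $U\ni p$ one picks $\ell$ small enough that $\overline{B}(p,\ell/2)\subset U$ and obtains a representative of $[\alpha]$ supported in $U$. I expect this ball-containment estimate and the careful bookkeeping of which basepoint the loops are based at to be the delicate points; the first (restricted) claim of the lemma is exactly this argument applied with $p=p_0$ and $[\gamma_1],[\gamma_2]$ both loops, while the second claim applies it at the common endpoint of two general paths, so I would prove the pointwise statement once and invoke it in both settings.
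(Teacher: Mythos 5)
Your proposal is correct and takes essentially the same route as the paper's proof: extract competitors $\alpha_\varepsilon\simeq\overline{\gamma_1}*\gamma_2$ of length $<\varepsilon$, observe their images lie in arbitrarily small balls about the endpoint, conclude $[\overline{\gamma_1}*\gamma_2]$ is locally representable and hence trivial by the hypothesis, with the same split between the basepoint case and the general case. The one step you only gesture at ("careful bookkeeping of which basepoint") but the paper carries out explicitly is that, in the general case, you must first prove $\gamma_1(1)=\gamma_2(1)$ before $\overline{\gamma_1}*\gamma_2$ is a loop class at all; this follows from your own estimate, since competitors of length $<\varepsilon$ join the two endpoints, forcing $d(\gamma_1(1),\gamma_2(1))<\varepsilon$ for every $\varepsilon>0$.
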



\begin{proof}


We will only show that the lifted pseudo-metric $\dps$ satisfies positive definiteness. Symmetry and the triangle inequality are obvious. In the following, the metric on $M$ is denoted by $d$.

First, suppose that $[\gamma_1],[\gamma_2]\in \ps{M}{d}$  are equal homotopy classes, $[\gamma_1]=[\gamma_2]$. So,  representatives $\gamma_1\in[\gamma_1]$ and $\gamma_2\in[\gamma_2]$ are homotopic rel endpoints. Thus, the paths $\overline{\gamma_1}$ and $\gamma_2$ concatenate to a loop $\overline{\gamma_1}*\gamma_2$ based at $\gamma_1(1)=\gamma_2(1)$ that is null-homotopic. Therefore, $\dps([\gamma_1],[\gamma_2])=0$. 

Now, suppose for homotopy classes $[\gamma_1],[\gamma_2]\in \ps{M}{d}$ that 
$\dps([\gamma_1],[\gamma_2])=0$.
%
%
%
Let $\varepsilon>0$ be given. Since the lifted pseudo-metric is defined as an infimum, there exists a path $\alpha_\varepsilon$ with initial point $\alpha_\varepsilon(0)=\gamma_1(1)$ and end point $\alpha_\varepsilon(1)=\gamma_2(1)$
such that $\alpha_\varepsilon\simeq\overline{\gamma_1}*\gamma_2$ and $\length{M}(\alpha_\varepsilon)<\varepsilon$. For any $t\in I$,
\begin{eqnarray*}
d(\gamma_1(1),\alpha_\varepsilon(t)) 
 &\leq& \length{M}\left(\alpha_\varepsilon|_{[0,t]}\right) \\ &\leq& \length{M}(\alpha_\varepsilon) \\ &<& \varepsilon.
\end{eqnarray*}
Thus, the image $\Ima(\alpha_\varepsilon)$ is a subset of the open ball $B(\gamma_1(1),\varepsilon)$ centered at $\gamma_1(1)$ of radius $\varepsilon$.
%

If $[\gamma_1],[\gamma_2]\in\pilip{1}(M,p_0)$, then it is immediate that $\left[\overline{\gamma_1}*\gamma_2\right]\in\pilip{1}(M,p_0)$ and the path $\alpha_\varepsilon$ is a loop. Otherwise, since $\varepsilon>0$ is an arbitrary positive value,
the initial point $\alpha_\varepsilon(0)=\gamma_1(1)$ is arbitrarily close to the end point $\alpha_\varepsilon(1)=\gamma_2(1)$, that is, $d(\gamma_1(1),\gamma_2(1))<\varepsilon$ for all $\varepsilon>0$. Thus, the end points of $\gamma_1$ and $\gamma_2$ are equal, $\gamma_1(1)=\gamma_2(1)$. So, $\left[\overline{\gamma_1}*\gamma_2\right]$ is a class of loops centered at $\gamma_1(1)$ and, for any $\varepsilon>0$, the path $\alpha_\varepsilon$ is a loop. 

Now, for every open ball $B(\gamma_1(1),\varepsilon)$ centered at $\gamma_1(1)$, the class of loops $[\overline{\gamma_1}*\gamma_2]$ has a representative $\alpha_\varepsilon$ whose image is a subset of the open ball. Since open balls form a basis for the topology on $M$, the class $[\overline{\gamma_1}*\gamma_2]$ is locally representable. So, whether only the base point $p_0$ supports only trivial local representation and $[\gamma_1],[\gamma_2]\in\pilip{1}(M,p_0)$ or all points support only trivial local representation, $[\overline{\gamma_1}*\gamma_2]$ is the trivial class based at $\gamma_1(1)$. Therefore, the paths $\gamma_1$ and $\gamma_2$ are homotopic rel endpoints, that is, $[\gamma_1]=[\gamma_2]$.
\end{proof}

\subsection{Endpoint projection and an action of $\pilip{1}(M,p_0)$}



Along with the pseudo-metric, there is a based map from the based universal path space $(\ps{M}{d},[p_0])$ to the based metric space $(M,p_0)$ given by endpoint projection:
\begin{center}
$\begin{array}{rccl}
\pi: & \ps{M}{d} & \rightarrow & M \\
     & [\gamma]  &   \mapsto &   \gamma(1). 
\end{array}$
\end{center}
The map $\pi$ is well-defined as all paths contained in the homotopy class $[\gamma]$ have the same endpoint. Additionally, the map $\pi$ is based as $\pi([p_0])=p_0$. As is now shown, endpoint projection is a Lipschitz map.


\begin{lemma}\label{endpoint projection is a metric map}
Endpoint projection $\pi:\ps{M}{d}  \rightarrow  M$ is Lipschitz with 
\[
0<\Lip(\pi)\leq 1.
\]
Additionally, if $M$ is a geodesic space, then $\Lip(\pi)=1$.
\end{lemma}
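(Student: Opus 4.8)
The plan is to read both bounds straight off the definition of $\dps$, using only the elementary fact that the length of a path dominates the distance between its endpoints.

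First I would establish the upper bound $\Lip(\pi)\le 1$. Fix classes $[\gamma_1],[\gamma_2]\in\ps{M}{d}$ and let $\beta$ be any path with $\beta\simeq\overline{\gamma_1}*\gamma_2$ appearing in the infimum defining $\dps([\gamma_1],[\gamma_2])$. Since $\overline{\gamma_1}$ joins $\gamma_1(1)$ to $p_0$ and $\gamma_2$ joins $p_0$ to $\gamma_2(1)$, the path $\beta$ joins $\gamma_1(1)=\pi([\gamma_1])$ to $\gamma_2(1)=\pi([\gamma_2])$, and therefore $d(\pi([\gamma_1]),\pi([\gamma_2]))\le\length{M}(\beta)$. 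Taking the infimum over all such competitors $\beta$ yields
\[
d(\pi([\gamma_1]),\pi([\gamma_2]))\le\dps([\gamma_1],[\gamma_2]),
\]
which is exactly the statement that $\pi$ is $1$-Lipschitz.

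For strict positivity I would observe that $\pi$ is non-constant. Since $M$ has more than one element and is Lipschitz path connected, there is a point $q\ne p_0$ and a path $\gamma$ joining $p_0$ to $q$, so $\pi([\gamma])=q\ne p_0=\pi([p_0])$. Because $\dps([\gamma],[p_0])\le\length{M}(\gamma)<\infty$ while $d(\pi([\gamma]),\pi([p_0]))=d(q,p_0)>0$, the ratio defining $\Lip(\pi)$ is bounded below by a positive quantity, so $\Lip(\pi)>0$.

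Finally, in the geodesic case I would exhibit a pair realizing ratio $1$. Pick $q\ne p_0$ and a geodesic $\sigma$ joining $p_0$ to $q$, so that $\length{M}(\sigma)=d(p_0,q)$. Since the constant path $\overline{p_0}$ contributes nothing, $\overline{p_0}*\sigma\simeq\sigma$, so $\sigma$ itself is a competitor in the infimum defining $\dps([p_0],[\sigma])$; hence $\dps([p_0],[\sigma])\le d(p_0,q)$. Combined with the lower bound from the first step, $d(p_0,q)=d(\pi([p_0]),\pi([\sigma]))\le\dps([p_0],[\sigma])$, this forces $\dps([p_0],[\sigma])=d(p_0,q)>0$ and hence Lipschitz ratio exactly $1$ for this pair; together with $\Lip(\pi)\le 1$ this gives $\Lip(\pi)=1$. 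I expect no serious obstacle: the only point requiring care is the geodesic step, where one must confirm that $\sigma$ genuinely belongs to the competitor set of the infimum (it does, as a representative of its own class $[\sigma]=[\overline{p_0}*\sigma]$) so that the bound $\dps([p_0],[\sigma])\le d(p_0,q)$ is attained rather than merely approximated.
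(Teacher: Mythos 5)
Your proposal is correct and follows essentially the same route as the paper: the $1$-Lipschitz bound comes from the fact that every competitor $\beta$ in the infimum joins the projected endpoints, positivity comes from non-constancy of $\pi$, and the geodesic case is settled by exhibiting a geodesic as a competitor that realizes the infimum. The only cosmetic difference is that you anchor the realizing pair at the base point $([p_0],[\sigma])$, whereas the paper takes arbitrary distinct $p_1,p_2$ and connects through a path $\gamma_1$ from $p_0$; both are valid instances of the same idea.
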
 

\begin{proof}
Let $[\gamma_1]_{p_0}^{p_1},[\gamma_2]_{p_0}^{p_2}\in \ps{M}{d}$. Note that if a path $\beta$ is homotopic rel endpoints to the concatenation $\overline{\gamma_1}*\gamma_2$, then $\beta$ is a path with initial point $p_1$ and endpoint $p_2$.  Then, since the distance $d(p_1,p_2)$ between points $p_1$ and $p_2$ is bounded above by the length $\length{M}(\beta)$ of any path $\beta$ joining them, 
\begin{eqnarray*}
d(\pi[\gamma_1]_{p_0}^{p_1},\pi[\gamma_2]_{p_0}^{p_2}) & = & d(p_1,p_2) \\
 &\leq & \inf\{\length{M}(\beta)~:~\beta\text{ joins }p_1\text{ and }p_2\} \\
 & \leq & \inf\{\length{M}(\beta)~:~\beta\simeq\overline{\gamma_1}*\gamma_2\text{ rel endpoints }p_1\text{ and }p_2\}\\
 & = & \dps\left([\gamma_1]_{p_0}^{p_1},[\gamma_2]_{p_0}^{p_2}\right).
\end{eqnarray*}
Therefore, the map $\pi$ is Lipschitz and $\Lip(\pi)\leq 1$.

Now, by assumption, the metric space $M$ is Lipschitz path connected and has more than one element. Thus, endpoint projection $\pi$ is not a constant map. So, $\Lip(\pi)>0$.

Additionally, assume that $M$ is a geodesic space. Let $p_1, p_2\in M$ be distinct points in $M$. Let $\gamma_M:I\rightarrow M$ be a geodesic joining $p_1$ to $p_2$. Now, let $\gamma_1$ be a path in $M$ joining $p_0$ to $p_1$. So, $\gamma_2=\gamma_1*\gamma_M$ is a path in $M$ joining $p_0$ to $p_2$. Then, considering $[\gamma_1]_{p_0}^{p_1}$ and $ [\gamma_2]_{p_0}^{p_2}$, since $\gamma_M\simeq\overline{\gamma_1}*\gamma_2$,
\begin{eqnarray*}
\dps([\gamma_1]_{p_0}^{p_1},[\gamma_2]_{p_0}^{p_2} )& = & \inf\{~\length{M}(\beta)~:~\beta\simeq\overline{\gamma_1}*\gamma_2~\} \\
& = & \length{M}(\gamma_M) \\
& = & d(p_1,p_2) \\
& = & d(\pi[\gamma_1]_{p_0}^{p_1},\pi[\gamma_2]_{p_0}^{p_2})\end{eqnarray*}
Thus, $\Lip(\pi)=1$. 

\end{proof}

There is also a left group action of the first Lipschitz homotopy group $\pilip{1}(M,p_0)$ on the universal path space $\ps{M}{d}$ given by concatenation:
\begin{center}
$\begin{array}{rccl}
\ast: & \pilip{1}(M,p_0)\times\ps{M}{d} & \longrightarrow & \ps{M}{d} \\ \\
 & ([\alpha]_{p_0}^{p_0}, [\gamma]_{p_0}^p) & \longmapsto &[\alpha\ast\gamma]_{p_0}^p .
\end{array}$
\end{center}

The stated map is indeed a left group action as, for any $[\gamma]\in\ps{M}{d}$,
\[
[p_0]\ast[\gamma] = [p_0\ast\gamma] = [\gamma],
\]
and, for any $[\alpha_1],[\alpha_2]\in\pilip{1}(M,p_0)$ and $[\gamma]\in\ps{M}{d}$,
\begin{eqnarray*}
[\alpha_1]\ast([\alpha_2]\ast[\gamma]) & = & [\alpha_1]\ast([\alpha_2\ast\gamma]) \\
							    & = & [\alpha_1\ast(\alpha_2\ast\gamma)] \\
							    & = & [(\alpha_1\ast\alpha_2)\ast\gamma] \\
							    & = & [\alpha_1\ast\alpha_2]\ast[\gamma] \\
							    & = & ([\alpha_1]\ast[\alpha_2])\ast[\gamma].
\end{eqnarray*}

Additionally, the group action preserves the fibers of the endpoint projection $\pi$ as, for $[\alpha]_{p_0}^{p_0}\in\pilip{1}(M,p_0)$ and $[\gamma]_{p_0}^p\in\ps{M}{d}$,
\[
\pi\left([\alpha]_{p_0}^{p_0}\ast[\gamma]_{p_0}^p \right) = \pi\left([\alpha\ast\gamma]_{p_0}^p \right) = p =\pi\left([\gamma]_{p_0}^p\right).
\]

\begin{theorem}\label{free and isometric action}
The first Lipschitz homotopy group $\pilip{1}(M,p_0)$ acts freely and isometrically on the universal path space $\ps{M}{d}$.
\end{theorem}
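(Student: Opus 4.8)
The plan is to verify the two properties separately. Both reduce to the standard algebraic identities that make concatenation of Lipschitz paths well-behaved up to homotopy rel endpoints — associativity, the cancellation $\gamma\ast\overline{\gamma}\simeq\gamma(0)$ of a path with its inverse, and the neutrality of the constant path — all of which already hold in the Lipschitz category, since $\pilip{1}(M,p_0)$ has been established to be a group under concatenation.

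For freeness, I would suppose that some $[\alpha]\in\pilip{1}(M,p_0)$ fixes a point $[\gamma]_{p_0}^{p}\in\ps{M}{d}$, that is, $[\alpha\ast\gamma]=[\gamma]$, and deduce that $[\alpha]=[p_0]$. Concatenating with the inverse class $[\overline{\gamma}]$ of the path from $p$ back to $p_0$ and invoking associativity together with $\gamma\ast\overline{\gamma}\simeq p_0$ yields the chain of equalities of homotopy classes
\[
[\alpha]=[\alpha\ast(\gamma\ast\overline{\gamma})]=[(\alpha\ast\gamma)\ast\overline{\gamma}]=[\gamma\ast\overline{\gamma}]=[p_0].
\]
Hence only the identity element has a fixed point, so the action is free.

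For the isometric property, fix $[\alpha]\in\pilip{1}(M,p_0)$ and $[\gamma_1]_{p_0}^{p_1},[\gamma_2]_{p_0}^{p_2}\in\ps{M}{d}$. The key observation is that the homotopy class governing $\dps([\alpha]\ast[\gamma_1],[\alpha]\ast[\gamma_2])$ coincides with the one governing $\dps([\gamma_1],[\gamma_2])$. Indeed, since $\overline{\alpha\ast\gamma_1}\simeq\overline{\gamma_1}\ast\overline{\alpha}$ and $\overline{\alpha}\ast\alpha\simeq p_0$, I would compute
\[
\overline{\alpha\ast\gamma_1}\ast(\alpha\ast\gamma_2)\simeq\overline{\gamma_1}\ast(\overline{\alpha}\ast\alpha)\ast\gamma_2\simeq\overline{\gamma_1}\ast\gamma_2
\]
rel endpoints. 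Consequently the set of Lipschitz paths $\beta$ with $\beta\simeq\overline{\alpha\ast\gamma_1}\ast(\alpha\ast\gamma_2)$ is identical to the set of those with $\beta\simeq\overline{\gamma_1}\ast\gamma_2$, so the two infima defining $\dps$ in Definition~\ref{def ulps} are taken over exactly the same collection of lengths $\length{M}(\beta)$. Therefore $\dps([\alpha]\ast[\gamma_1],[\alpha]\ast[\gamma_2])=\dps([\gamma_1],[\gamma_2])$, and the action is isometric.

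The routine work consists of the associativity, inverse-cancellation, and reparametrization homotopies. The only point requiring genuine care — and the place I expect any difficulty to lie — is confirming that each of these homotopies can be realized by a \emph{Lipschitz} map $I\times I\to M$, so that every manipulation remains internal to $\pilip{1}(M,p_0)$ and $\ps{M}{d}$ rather than living merely in the continuous category. This is precisely the content ensuring that $\pilip{1}(M,p_0)$ is a group, so no new obstacle arises; the main conceptual step is simply recognizing that the displayed homotopy equivalence forces the two defining infima to agree term by term.
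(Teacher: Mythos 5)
Your proposal is correct and follows essentially the same route as the paper's proof: the same cancellation chain $[\alpha]=[(\alpha\ast\gamma)\ast\overline{\gamma}]=[\gamma\ast\overline{\gamma}]=[p_0]$ for freeness, and the same observation that $\overline{\alpha\ast\gamma_1}\ast(\alpha\ast\gamma_2)\simeq\overline{\gamma_1}\ast\gamma_2$ forces the two infima defining $\dps$ to range over identical sets of paths. Your closing remark is also on target: the Lipschitz realizability of the associativity, inverse-cancellation, and unit homotopies is exactly what the group structure of $\pilip{1}(M,p_0)$ already provides, so no new work is needed.
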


\begin{proof}
First, to show that the group action is free, let $[\gamma]\in\ps{M}{d}$ and $[\alpha]\in\pilip{1}(M,p_0)$ where $[\alpha]\ast[\gamma]=[\gamma]$. By the definition of the group action, the paths $\alpha\ast\gamma\simeq\gamma$ are homotopic. Recall that the path $\overline{\gamma}$ is the reverse of the path $\gamma$ and that $\gamma\ast\overline{\gamma}$ is a null homotopic loop based at $p_0$. The loop $\alpha$ is thus null homotopic as
\begin{eqnarray*}
\alpha    & \simeq & \alpha\ast p_0 \\
		& \simeq & \alpha\ast (\gamma\ast\overline{\gamma}) \\
		& \simeq & (\alpha\ast \gamma)\ast\overline{\gamma} \\
		& \simeq & \gamma\ast\overline{\gamma} \\
		& \simeq & p_0.
\end{eqnarray*}
Thus, since $[\alpha]=[p_0]$, the group action is free.

Now, to show that the group action is isometric, let $[\gamma_1],[\gamma_2]\in\ps{M}{d}$ and $[\alpha]\in\pilip{1}(M,p_0)$. Note the following equivalences, where $\beta$ is a path in $M$:
\begin{center}
$\begin{array}{lcl}
\beta\simeq\overline{\gamma_1}\ast\gamma_2 & \Leftrightarrow  & \beta\simeq\overline{\gamma_1}\ast p_0\ast\gamma_2 \\
& \Leftrightarrow  & \beta\simeq\overline{\gamma_1}\ast (\overline{\alpha}\ast\alpha)\ast\gamma_2 \\
& \Leftrightarrow  & \beta\simeq(\overline{\gamma_1}\ast \overline{\alpha})\ast(\alpha\ast\gamma_2) \\
& \Leftrightarrow  & \beta\simeq(\overline{\alpha\ast\gamma_1})\ast(\alpha\ast\gamma_2). \\
\end{array}$
\end{center}
The final step follows from the equality of paths $\overline{\gamma_1}\ast \overline{\alpha}=\overline{\alpha\ast\gamma_1}$.

Therefore, the infimums appearing from the definition of the lifted pseudo-metric $\dps$ in
\[
\dps([\gamma_1],[\gamma_2]) = \inf\left\{\length{M}(\beta)~:~\beta\simeq\overline{\gamma_1}\ast\gamma_2 \right\}
\]
and
\[
\dps([\alpha\ast\gamma_1],[\alpha\ast\gamma_2]) = \inf\left\{\length{M}(\beta)~:~\beta\simeq(\overline{\alpha\ast\gamma_1})\ast(\alpha\ast\gamma_2) \right\}
\]
are determined by the same set of paths. As such, the distances
\[
\dps([\gamma_1],[\gamma_2]) = \dps([\alpha\ast\gamma_1],[\alpha\ast\gamma_2])
\]
are equal and thus the group action is isometric.
\end{proof}

Following from Theorem~\ref{free and isometric action}, the first Lipschitz homotopy group $\pilip{1}(M,p_0)$ acts isometrically on itself when the action is restricted to $\pilip{1}(M,p_0)\subset\ps{M}{d}$. Thus, by Lemma~\ref{universal path space is a metric space} and Theorem~\ref{free and isometric action},

\begin{corollary}\label{metric group}
If $p_0$ supports only trivial local representation, then the first Lipschitz homotopy group $\pilip{1}(M,p_0)$ is a metric group with respect to the lifted metric $\dps$. 
\end{corollary}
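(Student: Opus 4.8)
The plan is to obtain the corollary as a direct synthesis of Lemma~\ref{universal path space is a metric space} and Theorem~\ref{free and isometric action}, the only genuine work being to identify precisely which structure ``metric group'' refers to and to check that the two cited results supply exactly that structure. First I would record that $(\pilip{1}(M,p_0),\dps)$ is a metric space. Symmetry and the triangle inequality for $\dps$ hold on all of $\ps{M}{d}$, and the hypothesis that $p_0$ supports only trivial local representation is precisely the hypothesis under which Lemma~\ref{universal path space is a metric space} delivers positive-definiteness on the subset $\pilip{1}(M,p_0)$. I would note here that the stronger global hypothesis of that lemma (every point of $M$ supporting only trivial local representation) is not needed, since the argument never leaves the fibre of $\pi$ over $p_0$.

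Next I would bring in Theorem~\ref{free and isometric action} to get compatibility of $\dps$ with the group operation. Since $\pilip{1}(M,p_0)$ is invariant under the left action $([\alpha],[\gamma])\mapsto[\alpha\ast\gamma]$, restricting that action to $\pilip{1}(M,p_0)\subset\ps{M}{d}$ realises the group acting on itself by left translation. The isometry clause of Theorem~\ref{free and isometric action} then reads $\dps([\alpha\ast\gamma_1],[\alpha\ast\gamma_2])=\dps([\gamma_1],[\gamma_2])$ for all $[\alpha],[\gamma_1],[\gamma_2]\in\pilip{1}(M,p_0)$, which is exactly the statement that $\dps$ is left-invariant. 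Thus every left translation is a $\dps$-isometry, so $(\pilip{1}(M,p_0),\dps)$ is a group carrying a left-invariant metric; this is the metric-group structure being claimed.

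The step I expect to require the most care is pinning down how much continuity ``metric group'' should encode, because left-invariance is genuinely all that the two cited results provide and it does not by itself upgrade to two-sided continuity of the operations. I would therefore read ``metric group'' in the homogeneous sense natural to this construction, namely a group equipped with a metric under which left translations are isometries; under this reading the first two steps already close the argument. For added confidence I would record the two structural identities for the induced norm $\|g\|\deq\inf\{\length{M}(\beta):[\beta]=g\}$, observing that $\dps([\gamma_1],[\gamma_2])=\|[\gamma_1]^{-1}\ast[\gamma_2]\|$ since the concatenation $\overline{\gamma_1}\ast\gamma_2$ has a fixed homotopy class $[\gamma_1]^{-1}\ast[\gamma_2]$: namely $\|g\|=\|g^{-1}\|$, because reversing a loop preserves its length while inverting its class, and $\|g_1\ast g_2\|\le\|g_1\|+\|g_2\|$, because concatenating representatives adds lengths. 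These give continuity of inversion and of multiplication at the identity; I would explicitly flag that promoting this to continuity everywhere would require controlling the norm under conjugation (equivalently, right-invariance, which fails for this length-based norm), and is not part of the stated conclusion. So the substance of the corollary rests entirely on the positive-definiteness of Lemma~\ref{universal path space is a metric space} together with the left-invariance furnished by Theorem~\ref{free and isometric action}.
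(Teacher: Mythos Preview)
Your proposal is correct and follows essentially the same approach as the paper: the paper's argument consists of the single sentence preceding the corollary, observing that the isometric action of Theorem~\ref{free and isometric action} restricts to an isometric self-action of $\pilip{1}(M,p_0)$ by left translation, and then invoking Lemma~\ref{universal path space is a metric space} for positive-definiteness. Your additional discussion of the norm $\|g\|$, inversion-invariance, and the subtlety about two-sided continuity is sound but goes beyond what the paper supplies; the paper uses ``metric group'' in exactly the homogeneous sense you identify and offers no further justification.
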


\section{Unique lifting property and the universal path space}

We now discuss lifts of Lipschitz maps with respect to endpoint projection. In this section, as in the last, fix a based metric space $(M,p_0)$ with metric $d$. Additionally, assume that $M$ is Lipschitz path connected and contains more than one element. 

\begin{definition}
Let $(X,x_0)$ be a based metric space and $f:(X,x_0)\rightarrow (M,p_0)$ be a based Lipschitz map. A \emph{lift} of the map $f$ is a based Lipschitz map $F:(X,x_0)\rightarrow(\ps{M}{d},[p_0])$ such that the following diagram commutes:
\begin{center}
\begin{tikzcd}
 && \ps{M}{d} \arrow[dd, "\pi"] \\ \\
 X \arrow[rr, "f"']\arrow[uurr, dashed, "F"] && M.
\end{tikzcd}
\end{center}
\end{definition}

The following result comparing Lipschitz constants of the Lipschitz map $f$ and the lift $F$ is immediate, following from Lemma~\ref{endpoint projection is a metric map}.

\begin{lemma}\label{lift lip bound}
Let $X$ be a based metric space with metric $d_X$ and $f:X\rightarrow M$ be a based Lipschitz map. Let $F:X\rightarrow\ps{M}{d}$ be a lift of the map $f$. Then
\[
\Lip(f)\leq\Lip(F).
\]
\end{lemma}

\begin{proof}
Let $x_1,x_2\in X$. Then,
\begin{eqnarray*}
d(f(x_1),f(x_2)) & = & d(\pi\circ F(x_1),\pi\circ F(x_2)) \\
						& \leq & \dps( F(x_1), F(x_2)) \\
						& \leq & \Lip(F)\cdot d_X(x_1,x_2).
\end{eqnarray*}
Thus, the Lipschitz constant $\Lip(f)$ is no larger than $\Lip(F)$. 

\end{proof}

\subsection{Lifts of paths to the universal path space}\label{def of lift}


Now, we show that there exist lifts of paths in $M$ to the universal path space. Let $\gamma:(I,0)\rightarrow(M,p_0)$ be a based path. For any $t\in I$, there exists a reparametrization $\gamma_t$ of the path $\gamma$ such that the endpoint of the reparametrization is the point $\gamma(t)$:
\begin{center}
$\begin{array}{rccl}
\gamma_t: & (I,0) & \rightarrow & (M,p_0) \\
 & s & \mapsto & \gamma(ts).
\end{array}$
\end{center}
The path $\gamma_t$ then determines an element of the universal path space, $[\gamma_t]_{p_0}^{\gamma(t)}\in\ps{M}{d}$. Thus, there is a map into the universal path space given by:
\begin{center}
$\begin{array}{rccl}
\r{\gamma}: & (I,0) & \rightarrow & (\ps{M}{d},[p_0]) \\
& t & \mapsto & \ds[\gamma_t]_{p_0}^{\gamma(t)}.
\end{array}$
\end{center}
The map is based as $\r{\gamma}(0)=[p_0]$. Also, 
for any $t\in I$,
\[
\pi\circ\r{\gamma}(t)=\pi\left([\gamma_t]_{p_0}^{\gamma(t)}\right)=\gamma(t).
\]
The next lemma shows that the map $\r{\gamma}$ is indeed Lipschitz and thus a lift of the path $\gamma$.

\begin{lemma}\label{the lift is Lipschitz}
Let $\gamma:(I,0)\longrightarrow(M,p_0)$ be a based path. Then, the map $\r{\gamma}$
is Lipschitz.
 Thus, $\r{\gamma}$ is a lift of the path $\gamma$. Additionally, $\Lip(\r{\gamma})=\Lip(\gamma)$.
\end{lemma}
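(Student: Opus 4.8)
The plan is to bound the lifted pseudo-metric $\dps$ between $\r{\gamma}(t_1)$ and $\r{\gamma}(t_2)$ directly, and then to pin down the exact value of $\Lip(\r{\gamma})$ by combining this bound with Lemma~\ref{lift lip bound}. By symmetry of $\dps$ I may assume $t_1\le t_2$. The central observation is that the concatenation $\overline{\gamma_{t_1}}\ast\gamma_{t_2}$ — which by Definition~\ref{def ulps} governs $\dps(\r{\gamma}(t_1),\r{\gamma}(t_2))=\dps([\gamma_{t_1}],[\gamma_{t_2}])$ — is homotopic rel endpoints to the single sub-path of $\gamma$ running from $\gamma(t_1)$ to $\gamma(t_2)$, namely $\sigma(s):=\gamma\bigl(t_1+s(t_2-t_1)\bigr)$. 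Indeed, $\gamma_{t_2}$ and $\gamma_{t_1}\ast\sigma$ are two reparametrizations tracing $\gamma$ over $[0,t_2]$ with the same endpoints, so $\gamma_{t_2}\simeq\gamma_{t_1}\ast\sigma$; the prefix $\gamma_{t_1}$ then cancels against $\overline{\gamma_{t_1}}$ because an out-and-back path $\overline{\gamma_{t_1}}\ast\gamma_{t_1}$ is Lipschitz null-homotopic. Chaining these gives $\overline{\gamma_{t_1}}\ast\gamma_{t_2}\simeq\sigma$.

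Granting this homotopy, $\sigma$ is an admissible competitor in the infimum defining the lifted pseudo-metric, so
\[
\dps\bigl(\r{\gamma}(t_1),\r{\gamma}(t_2)\bigr)\;\leq\;\length{M}(\sigma)\;\leq\;\Lip(\gamma)\,|t_2-t_1|,
\]
where the last inequality is the elementary estimate that a Lipschitz path restricted to a subinterval has length at most $\Lip(\gamma)$ times the length of that subinterval. This shows $\r{\gamma}$ is Lipschitz with $\Lip(\r{\gamma})\leq\Lip(\gamma)$; in particular $\r{\gamma}$ is genuinely a lift of $\gamma$, since the relation $\pi\circ\r{\gamma}=\gamma$ and the based condition $\r{\gamma}(0)=[p_0]$ were already verified just before the statement.

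For the reverse inequality I would simply invoke Lemma~\ref{lift lip bound}: as $\r{\gamma}$ is a lift of $\gamma$, that lemma yields $\Lip(\gamma)\leq\Lip(\r{\gamma})$. Combined with the upper bound above, this gives $\Lip(\r{\gamma})=\Lip(\gamma)$, as claimed.

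I expect the main obstacle to be making the homotopy $\overline{\gamma_{t_1}}\ast\gamma_{t_2}\simeq\sigma$ genuinely Lipschitz, with endpoints held fixed throughout, rather than merely continuous. This reduces to writing down an explicit Lipschitz null-homotopy of the out-and-back path $\overline{\gamma_{t_1}}\ast\gamma_{t_1}$ — retracting the turnaround point back toward $p_0$ along $\gamma$ — and checking that the interpolating reparametrizations stay uniformly Lipschitz jointly in both variables; the $L^1$ metric on $I\times I$ fixed in the opening \emph{Convention} makes these joint Lipschitz bounds transparent. Everything else in the argument is routine.
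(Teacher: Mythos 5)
Your proposal is correct and follows essentially the same route as the paper's proof: both use the restricted sub-path $\gamma|_{[t_1,t_2]}$ (your $\sigma$) as a competitor in the infimum defining $\dps$, obtain $\Lip(\r{\gamma})\leq\Lip(\gamma)$, and then invoke Lemma~\ref{lift lip bound} for the reverse inequality. The only difference is that you sketch a justification of the homotopy $\overline{\gamma_{t_1}}\ast\gamma_{t_2}\simeq\gamma|_{[t_1,t_2]}$, which the paper simply asserts; your concern about making it Lipschitz is legitimate but routine, exactly as you say.
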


\begin{proof}
Let $t_1,t_2\in I$. The restricted path $\gamma|_{[t_1,t_2]}$ is homotopic rel endpoints to the concatenation $\overline{\gamma_{t_1}}*\gamma_{t_2}$. So,
\begin{eqnarray*}
\dps(\r{\gamma}(t_1),\r{\gamma}(t_2)) &=& \inf\left\{~\length{M}(\beta)~:~\beta\simeq\overline{\gamma_{t_1}}*\gamma_{t_2}~\right\} \\
									&\leq  & \length{M}(\gamma|_{[t_1,t_2]}) \\
									&\leq & \Lip(\gamma)\cdot |t_1-t_2|.
\end{eqnarray*}
Thus, the map $\r{\gamma}$ is indeed Lipschitz with $\Lip(\r{\gamma})\leq\Lip(\gamma)$. So, $\r{\gamma}$ is a lift of $\gamma$. Additionally, by Lemma~\ref{lift lip bound}, $\Lip(\r{\gamma})\geq\Lip(\gamma)$. Therefore, $\Lip(\r{\gamma})=\Lip(\gamma)$.
\end{proof}

\subsection{Unique path lifiting and unique lifting properties}

Having determined that lifts of paths exist with reference to the universal path space, we show in Theorem~\ref{unique lifting} what conditions are sufficient for a general Lipschitz map to have a lift. 

As we proceed, we will also be interested in knowing if the lift of any given map is unique. When all lifts of paths are unique, the universal path space is said to have the unique path lifting property, which is defined now. 

\begin{definition}
Endpoint projection $\pi:(\ps{M}{d},[p_0])\rightarrow(M,p_0)$ has the \emph{unique path lifting property} if for any based path $\gamma:(I,0)\rightarrow(M,p_0)$, there exists a unique lift $\r{\gamma}:(I,0)\rightarrow(\ps{M}{d},[p_0])$ such that the following diagram commutes:
\begin{center}
\begin{tikzcd}
 & & \ps{M}{d} \arrow[dd, "\pi"] \\ \\
 I \arrow[rr, "\gamma"'] \arrow[rruu, dashed, "{\r{\gamma}}", "\exists!"'] & & M.
\end{tikzcd}
\end{center}
When this is the case, the universal path space $\ps{M}{d}$ is said to satisfy the unique path lifting property.
\end{definition}

In the last section of this paper, we show that the universal path space of any $\D^2$-unrectifiable metric space, in particular the Heisenberg group $\H^1$, satisfies the unique path lifting property. See Theorem~\ref{unique path lifting}.

In addition to describing sufficient conditions for lifting a Lipschitz map, we also show that the lifts guaranteed by Theorem~\ref{unique lifting} are unique whenever the universal path space satisfies the unique path lifting property.


{



%
%

\begin{theorem}\label{unique lifting}
Let $(X,x_0)$ be a based quasi-convex metric space with metric $d_X$. Let $f:(X,x_0)\rightarrow(M,p_0)$ be a based Lipschitz map such that the homomorphism induced by $f$ between first Lipschitz homotopy groups
\[
f_\#:\pilip{1}(X,x_0) \longrightarrow \pilip{1}(M,p_0)
\]
is trivial at $[p_0]$. Then, there exists a lift $\r{f}$ of the map $f$:
\begin{center}
\begin{tikzcd}
 & & \ps{M}{d} \arrow[dd, "\pi"] \\ \\
 X \arrow[rr, "f"] \arrow[uurr, dashed, "{\r{f}}"] & & M. 
\end{tikzcd}
\end{center}
Additionally, if $X$ is a length space, then $\lip{\r{f}}=\lip{f}$. Moreover, if the universal path space $\ps{M}{d}$ satisfies the unique path lifting property, then the lift $\r{f}$ is unique.
\end{theorem}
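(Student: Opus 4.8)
The plan is to imitate the classical lifting criterion from covering space theory, building the lift by pushing chosen paths forward through $f$ and recording the resulting homotopy classes. Concretely, for each $x\in X$ I would use quasi-convexity (and hence Lipschitz path connectedness) to select a Lipschitz path $\sigma_x:(I,0)\to(X,x_0)$ with $\sigma_x(1)=x$, taking $\sigma_{x_0}$ to be the constant path. Then $f\circ\sigma_x$ is a based Lipschitz path in $M$ ending at $f(x)$, so I can set $\r{f}(x)\deq[f\circ\sigma_x]_{p_0}^{f(x)}\in\ps{M}{d}$. By construction $\pi\circ\r{f}=f$ and $\r{f}(x_0)=[p_0]$, so this is a based map lying over $f$; the real content is to show that it is well-defined and Lipschitz.

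Well-definedness is exactly where the hypothesis on $f_\#$ enters. If $\sigma_x'$ is a second Lipschitz path from $x_0$ to $x$, then $\sigma_x\ast\overline{\sigma_x'}$ is a Lipschitz loop at $x_0$, and its image $f_\#[\sigma_x\ast\overline{\sigma_x'}]=[(f\circ\sigma_x)\ast\overline{f\circ\sigma_x'}]$ is trivial in $\pilip{1}(M,p_0)$ by assumption. This forces $f\circ\sigma_x\simeq f\circ\sigma_x'$ rel endpoints, so the class $\r{f}(x)$ is independent of the chosen path. This independence is the crucial flexibility I will exploit in the metric estimate.

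For the Lipschitz bound, fix $x_1,x_2\in X$ and let $C\ge 1$ be a quasi-convexity constant, and choose a Lipschitz path $\tau$ from $x_1$ to $x_2$ with $\length{X}(\tau)\le C\cdot d_X(x_1,x_2)$. Since $\sigma_{x_1}\ast\tau$ is a Lipschitz path from $x_0$ to $x_2$, well-definedness lets me recompute $\r{f}(x_2)=[(f\circ\sigma_{x_1})\ast(f\circ\tau)]$. Consequently $\overline{f\circ\sigma_{x_1}}\ast(f\circ\sigma_{x_2})\simeq f\circ\tau$, so $f\circ\tau$ competes in the infimum defining $\dps$ and
\[
\dps(\r{f}(x_1),\r{f}(x_2))\le\length{M}(f\circ\tau)\le\Lip(f)\cdot\length{X}(\tau)\le C\cdot\Lip(f)\cdot d_X(x_1,x_2).
\]
Thus $\r{f}$ is Lipschitz with $\Lip(\r{f})\le C\cdot\Lip(f)$. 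When $X$ is a length space, $C$ may be taken arbitrarily close to $1$ (equivalently, $\tau$ may be chosen with length arbitrarily close to $d_X(x_1,x_2)$), yielding $\Lip(\r{f})\le\Lip(f)$; combined with the reverse inequality from Lemma~\ref{lift lip bound} this gives $\Lip(\r{f})=\Lip(f)$.

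Finally, for uniqueness under the unique path lifting property, suppose $F,G$ are two lifts of $f$. For any $x\in X$ I pick a Lipschitz path $\sigma$ from $x_0$ to $x$; then $F\circ\sigma$ and $G\circ\sigma$ are both based Lipschitz lifts of the based path $f\circ\sigma$, so they coincide by unique path lifting, and evaluating at $t=1$ gives $F(x)=G(x)$. As $x$ is arbitrary, $F=G$. I expect the main obstacle to be the Lipschitz estimate: it only works because well-definedness permits $\r{f}(x_2)$ to be recomputed along a path that first traverses $\sigma_{x_1}$ and then a short connector $\tau$, so the fixed but arbitrary reference paths $\sigma_{x_1},\sigma_{x_2}$ cancel and the estimate sees only the controllable length of $\tau$.
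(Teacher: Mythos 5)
Your proposal is correct and follows essentially the same route as the paper's proof: the same definition of $\r{f}$ by pushing reference paths through $f$ and taking homotopy classes, the same well-definedness argument from triviality of $f_\#$, the same Lipschitz estimate obtained by recomputing $\r{f}(x_2)$ along the concatenation of the reference path to $x_1$ with a quasi-convex connector, and the same uniqueness argument via the unique path lifting property. The only cosmetic difference is that you compare two arbitrary lifts $F$ and $G$ while the paper compares an arbitrary lift $F$ to the constructed $\r{f}$, which is equivalent.
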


\begin{proof}

We will begin by defining a lift of the map $f$ and arguing that the lift is well-defined. The argument will be the standard such argument. Then, we will show that the lift is indeed Lipschitz which follows from the map $f$ being Lipschitz and the metric space $X$ being quasi-convex. Finally, with the assumption that $\pi:(\ps{M}{d},[p_0])\rightarrow(M,p_0)$ satisfies the unique path lifting property, we show that the lift is unique.

We now define a map $\r{f}:X\rightarrow\ps{M}{d}$ using that the metric space $X$ is Lipschitz path-connected. Let $x\in X$. Since $X$ is Lipschitz path-connected, there exists a based path $\gamma:(I,0)\rightarrow (X,x_0)$ joining the base point $x_0$ to $x=\gamma(1)$. Thus, the composition $f\circ\gamma$ is a path in $M$ joining the base point $p_0$ to the point $f(x)$. So, by the construction in Section \ref{def of lift}, there is a lift $\r{f\circ\gamma}$ of the path $f\circ\gamma$. Define the map $\r{f}$ as sending the point $x\in X$ to the endpoint of this lifted path $\r{f\circ\gamma}$:
\begin{center}
$\begin{array}{rccc}
\r{f}: & (X,x_0) & \rightarrow & (\ps{M}{d},[p_0]) \\
 & x & \mapsto & \r{f\circ\gamma}(1).
\end{array}$
\end{center}

To show that the map $\r{f}$ is well-defined, let $\gamma':(I,0)\rightarrow (X, x_0)$ be another based path joining $x_0$ and $x$. Then, the concatenation $\gamma'*\overline{\gamma}$ is a loop in $X$ that is based at $x_0$ and post-composing by $f$ yields a loop in $M$ based at $p_0$:
\[
f\circ(\gamma'*\overline{\gamma})=(f\circ\gamma')*(\overline{f\circ\gamma}).
\]
Since $f$ induces the trivial homomorphism between first Lipschitz homotopy groups, the loop $\ds(f\circ\gamma')*(\overline{f\circ\gamma})$ is null-homotopic. Thus, the paths $f\circ\gamma'$ and $f\circ\gamma$ joining $p_0$ to $f(x)$ are homotopic rel endpoints. Therefore, the map $\r{f}$ is well-defined as we have the following equality in $\ps{M}{d}$:
\[
\r{f\circ\gamma'}(1)=[f\circ\gamma']=[f\circ\gamma]=\r{f\circ\gamma}(1).
\]

An immediate consequence of $\r{f}$ being well-defined is that the map is based. Indeed, when determining $\r{f}(x_0)$, the path $\gamma$ can be taken to be the constant path at $x_0$. Then, the path $f\circ\gamma$ is the constant path at $p_0$. So, the lifted path $\r{f\circ\gamma}$ is the constant path at $[p_0]$ and thus, $\r{f}(x_0)=\r{f\circ\gamma}(1)=[p_0]$. 

Before showing that the map $\r{f}$ is Lipschitz, note that since $\r{f\circ\gamma}$ is a lift of the path $f\circ\gamma$ in $M$,
\[
\pi\circ \r{f}(x) = \pi\circ \r{f\circ\gamma}(1)=f\circ\gamma(1)=f(x).
\]
Thus, provided that the map is Lipschitz, $\r{f}$ is indeed a lift of the given map $f$.


Let $x_1,x_2\in X$. Since the metric space $X$ is quasi-convex, by Definition~\ref{quasi convex}, there exists a path $\gamma_X$ joining $x_1$ to $x_2$ such that $\length{X}(\gamma_X)\leq C\cdot d_X(x_1,x_2)$, where $C$ is the quasi-convexity constant for $X$. 

Consider the distance $\dps(\r{f}(x_1),\r{f}(x_2))$. Since the classes $\r{f}(x_1)$ and $\r{f}(x_2)$ do not depend on the paths joining the base point $x_0$ to $x_1$ and $x_2$ respectively, neither does the distance. With this in mind, let $\gamma_1$ be a path in $X$ joining $x_0$ to $x_1$, which exists since $X$ is Lipschitz path-connected. Also, let $\gamma_2=\gamma_1*\gamma_X$, which is a path in $X$ joining $x_0$ to $x_2$. Then, we can rewrite the distance as 
\begin{equation*}
\dps(\r{f}(x_1),\r{f}(x_2))  =  \dps([f\circ\gamma_1],[f\circ\gamma_2]).
\end{equation*}

Note, by design $\overline{\gamma_1}*\gamma_2\simeq\gamma_X$. So, we have
\[
(\overline{f\circ\gamma_1})*(f\circ\gamma_2)=f\circ(\overline{\gamma_1}*\gamma_2)\simeq f\circ\gamma_X.
\]
This equivalence, along with the map $f$ being Lipschitz, yields the following string of inequalities:
\begin{eqnarray*}
\dps(\r{f}(x_1),\r{f}(x_2)) & = &  \dps([f\circ\gamma_1],[f\circ\gamma_2]) \nonumber \\
& = & \inf_{\alpha}\{~\length{M}(\alpha)~:~\alpha\simeq(\overline{f\circ\gamma_1})*(f\circ\gamma_2)~\} \nonumber \\
    & \leq & \length{M}(f\circ\gamma_X)  \nonumber \\
   & \leq & \lip{f}\cdot\length{X}(\gamma_X) \nonumber \\
   & \leq & \lip{f}\cdot C\cdot d_X(x_1,x_2) 
\end{eqnarray*}

Therefore, $\r{f}$ is Lipschitz with  $\lip{\r{f}}\leq C\cdot\lip{f}$. Moreover, if the metric space $X$ is a length space, then $\lip{\r{f}}\leq \lip{f}$. Additionally, by Lemma~\ref{lift lip bound}, $\lip{f}\leq\lip{\r{f}}$. Thus the Lipschitz constants for the maps $\r{f}$ and $f$ are equal.

Finally, assume that the universal path space $\ps{M}{d}$ satisfies the unique path lifting property. Let $F:X\rightarrow\ps{M}{d}$ be a lift of $f$ and take a point $x\in X$. Since $X$ is Lipschitz path-connected, there exists a  path $\gamma$ joining the base point $x_0$ to $x$. The composition $F\circ\gamma:I\rightarrow \ps{M}{d}$ is then a lift of the path $f\circ\gamma$. By assumption, lifts of paths are unique. Thus, we have an equality of paths $F\circ\gamma=\r{f\circ\gamma}$. Therefore, the lift $F$ is equal to the lift $\r{f}$ as
\[
F(x)=F\circ\gamma(1)=\r{f\circ\gamma}(1)=\r{f}(x)
\]
for any $x\in X$.
\end{proof}

\subsection{Unique lifting property and Lipschitz simply connected universal path space}


With the sufficient conditions for the existence of a lift from Theorem~\ref{unique lifting} in mind, We define what it means for a based Lipschitz map to satisfy the unique lifting property.

\begin{definition}\label{unique path lifting definition}
Let $(E,e_0)$ be a based metric space. A based Lipschitz map $r:(E,e_0)\rightarrow (M,p_0)$ satisfies the \emph{unique lifting property} if, for any based, quasi-convex metric space $(X,x_0)$ and any based Lipschitz map $f:(X,x_0)\rightarrow(M,p_0)$ such that the homomorphism induced on first Lipschitz homotopy groups $f_\#:\pilip{1}(X,x_0)\rightarrow\pilip{1}(M,p_0)$ is trivial, there exists a unique based Lipschitz map $\r{f}:(X,x_0)\rightarrow(E,e_0)$ such that the following diagram commutes:
\begin{center}
\begin{tikzcd}
 & & E \arrow[dd, "r"] \\ \\
 X \arrow[rr, "f"] \arrow[uurr, dashed, "\exists!"', "{\r{f}}"] & & M. 
\end{tikzcd}
\end{center}
When such a map $\r{f}$ exists, the map is called a \emph{lift} with respect to the map $r:(E,e_0)\rightarrow (M,p_0)$.
\end{definition}

Following from definitions, if the map $\pi:\ps{M}{d}\rightarrow M$ satisfies the unique lifting property, then $\pi$ satisfies the unique path lifting property. By Theorem~\ref{unique lifting}, the converse is true as well.

As we now show, if $\pi:(\ps{M}{d},[p_0])\rightarrow (M,p_0)$ satisfies the unique lifting property, then the universal path space $\ps{M}{d}$ is Lipschitz simply connected.

\begin{lemma}\label{induced r is injective}
Let $(E,e_0)$ be a based metric space. If a based Lipschitz map $r:(E,e_0)\rightarrow (M,p_0)$ satisfies the unique lifting property, the homomorphism induced by $r$ between first Lipschitz homotopy groups,
\[
r_{\#}:\pilip{1}(E,e_0)\longrightarrow\pilip{1}(M,p_0)
\]
is injective.
\end{lemma}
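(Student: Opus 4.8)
The plan is to show that $\ker r_\#=\{[e_0]\}$ by lifting a null-homotopy, in exact analogy with the covering-space proof that the induced map on fundamental groups is injective. Suppose $[\alpha]\in\pilip{1}(E,e_0)$ lies in the kernel, so that $r\circ\alpha$ is Lipschitz null-homotopic rel endpoints in $M$. Fix such a null-homotopy: a Lipschitz map $G:I\times I\to M$ with $G|_{I\times\{0\}}=r\circ\alpha$, $G|_{I\times\{1\}}\equiv p_0$, and $G|_{\{0\}\times I}=G|_{\{1\}\times I}\equiv p_0$. Since $G(0,0)=r(\alpha(0))=r(e_0)=p_0$, the map $G$ is based at $(0,0)$.

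First I would check that the source square is an admissible test space for the unique lifting property. With the $L^1$ metric of the Convention, $I\times I$ is convex, hence geodesic and in particular quasi-convex, and straight-line homotopies show it is Lipschitz simply connected, so $\pilip{1}(I\times I)$ is trivial and $G_\#$ is automatically the trivial homomorphism. As $r$ satisfies the unique lifting property (Definition~\ref{unique path lifting definition}), there is a unique based Lipschitz lift $\widetilde{G}:(I\times I,(0,0))\to(E,e_0)$ with $r\circ\widetilde{G}=G$.

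The heart of the argument is to identify the four boundary edges of $\widetilde{G}$. Applying the unique lifting property to the geodesic space $I$ (where again $\pilip{1}(I)$ is trivial) shows that based path lifts through $r$ are unique. The edge $\widetilde{G}|_{I\times\{0\}}$ and the loop $\alpha$ are both based Lipschitz lifts of $r\circ\alpha$ starting at $e_0$, so they coincide; thus $\widetilde{G}|_{I\times\{0\}}=\alpha$, and in particular $\widetilde{G}(1,0)=\alpha(1)=e_0$. Likewise $\widetilde{G}|_{\{0\}\times I}$ is a based lift of the constant path at $p_0$, hence equals the constant path at $e_0$, giving $\widetilde{G}(0,1)=e_0$. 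With both corners $(1,0)$ and $(0,1)$ now known to map to $e_0$, the remaining edges $\widetilde{G}|_{\{1\}\times I}$ and $\widetilde{G}|_{I\times\{1\}}$ are each based lifts of the constant path at $p_0$ and are therefore constant at $e_0$ as well.

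These four identifications say exactly that $\widetilde{G}$ is a Lipschitz homotopy rel endpoints from $\alpha$ to the constant loop at $e_0$, that is, a based null-homotopy of $\alpha$ in $E$. Hence $[\alpha]=[e_0]$, and $r_\#$ is injective. I expect the main obstacle to be the bookkeeping in the third step: one must order the edge-identifications correctly so that each edge's initial point is already known to equal $e_0$ before invoking uniqueness of the constant lift, and one must verify that ``lift'' in the sense of Definition~\ref{unique path lifting definition} (based, Lipschitz, over a quasi-convex source with trivial induced homomorphism) legitimately specializes to unique path lifting on $I$.
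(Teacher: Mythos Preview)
Your proof is correct and follows essentially the same strategy as the paper's: show $\ker r_\#$ is trivial by lifting a null-homotopy of $r\circ\alpha$ through $r$, using that the domain is quasi-convex and Lipschitz simply connected. The paper works with $\D^2$ rather than $I\times I$ and simply asserts that the lifted diagram commutes, whereas your edge-by-edge identification via unique path lifting on $I$ makes this step explicit.
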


\begin{proof}
Let homotopy class $[\alpha]\in\pilip{1}(E,e_0)$ be in the kernel of the homomorphism $r_{\#}$. Then, for any representative $\alpha$, the  loop $r\circ \alpha$ in $M$ is null-homotopic. So, there exists a Lipschitz map $H:\D^2\rightarrow M$ such that the following solid diagram commutes:
\begin{center}
\begin{tikzcd}
\S^1 \arrow[rr, "\alpha"] \arrow[dd, hookrightarrow] & & E \arrow[dd, "r"] \\ \\
\D^2 \arrow[rr, "H"]\arrow[rruu, dashed, "\r{H}", "\exists"'] & & M.
\end{tikzcd}
\end{center}

Since the based map $r:(E,e_0)\rightarrow(M,p_0)$ satisfies the unique lifting property and the 2-disk $\D^2$ is quasi-convex and Lipschitz simply connected, there exists a lift of the map $H$ to a Lipschitz map $\r{H}:\D^2\rightarrow E$ as indicated in the diagram. So, the loop $\alpha$ in the metric space $E$ is null-homotopic and thus $[\alpha]=[e_0]$. Therefore, the kernel of the homomorphism $r_\#$ is trivial.
\end{proof}

As a matter of note, Lemma~\ref{induced r is injective} does not rely on the uniqueness of a lift, but rather only on the existence of a lift. See Theorem~\ref{unique lifting} for how to construct the necessary lift in the instance that the map $\pi:(\ps{M}{d},[p_0])\rightarrow (M,p_0)$ is considered.

\begin{theorem}\label{upl implies simply connected}
If $\pi:(\ps{M}{d},[p_0])\rightarrow (M,p_0)$ satisfies the unique lifting property, then the universal path space $\ps{M}{d}$ is Lipschitz simply connected.
\end{theorem}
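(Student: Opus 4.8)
The plan is to verify the two defining features of Lipschitz simple connectivity for $\ps{M}{d}$: that it is Lipschitz path connected and that its first Lipschitz homotopy group $\pilip{1}(\ps{M}{d},[p_0])$ is trivial.

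Path connectivity is immediate and does not even require the hypothesis. Given any class $[\gamma]\in\ps{M}{d}$, represented by a path $\gamma$ joining $p_0$ to some point, the construction of Section~\ref{def of lift} produces the map $\r{\gamma}:(I,0)\rightarrow(\ps{M}{d},[p_0])$, which by Lemma~\ref{the lift is Lipschitz} is a Lipschitz path. Since $\r{\gamma}(0)=[p_0]$ and $\r{\gamma}(1)=[\gamma]$, this exhibits a Lipschitz path in $\ps{M}{d}$ joining the base point to $[\gamma]$. As every point is thereby joined to $[p_0]$, the space $\ps{M}{d}$ is Lipschitz path connected.

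For triviality of the first Lipschitz homotopy group I would first record two consequences of the hypothesis. Because $\pi$ satisfies the unique lifting property, it satisfies the unique path lifting property (as noted after Definition~\ref{unique path lifting definition}), and by Lemma~\ref{induced r is injective} the induced homomorphism $\pi_\#:\pilip{1}(\ps{M}{d},[p_0])\rightarrow\pilip{1}(M,p_0)$ is injective. Now let $[\tilde\alpha]\in\pilip{1}(\ps{M}{d},[p_0])$ be arbitrary, represented by a loop $\tilde\alpha:(I,0)\rightarrow(\ps{M}{d},[p_0])$ with $\tilde\alpha(1)=[p_0]$, and set $\alpha\deq\pi\circ\tilde\alpha$, a based loop in $M$. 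Since $\pi\circ\tilde\alpha=\alpha$ and $\tilde\alpha(0)=[p_0]$, the path $\tilde\alpha$ is a lift of $\alpha$; by unique path lifting it must coincide with the canonical lift $\r{\alpha}$ of Section~\ref{def of lift}. Evaluating at the endpoint gives $[p_0]=\tilde\alpha(1)=\r{\alpha}(1)=[\alpha]_{p_0}^{p_0}$, so the loop $\alpha=\pi\circ\tilde\alpha$ is null-homotopic in $M$; that is, $\pi_\#[\tilde\alpha]=[p_0]$. By the injectivity of $\pi_\#$, the class $[\tilde\alpha]$ is trivial, so $\pilip{1}(\ps{M}{d},[p_0])$ is trivial and $\ps{M}{d}$ is Lipschitz simply connected.

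The one delicate point is the identification $\tilde\alpha=\r{\alpha}$: one must check that the loop $\tilde\alpha$, read as a based path on $I$, genuinely qualifies as a lift of the based path $\alpha$ so that the unique path lifting property applies, and then that the canonical lift's endpoint $\r{\alpha}(1)$ is exactly the full homotopy class $[\alpha]_{p_0}^{p_0}$, which is the step forcing $\alpha$ to be null-homotopic. As an alternative that avoids invoking Lemma~\ref{induced r is injective}, once $\alpha$ is known to be null-homotopic one can lift a null-homotopy $H:\D^2\rightarrow M$ directly via the unique lifting property (legitimate since $\D^2$ is quasi-convex and Lipschitz simply connected) to obtain a null-homotopy of $\tilde\alpha$ in $\ps{M}{d}$.
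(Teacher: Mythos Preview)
Your proof is correct and follows essentially the same route as the paper: both use Lemma~\ref{induced r is injective} to get injectivity of $\pi_\#$, then take an arbitrary loop upstairs, observe that it must coincide with the canonical lift $\r{\alpha}$ of its projection by unique path lifting, and read off $[\alpha]=\r{\alpha}(1)=[p_0]$ to conclude the image of $\pi_\#$ is trivial. The only differences are cosmetic: you explicitly verify Lipschitz path connectedness of $\ps{M}{d}$ (which the paper omits), and you phrase the argument as showing each class lies in the kernel rather than showing the image is trivial, but these amount to the same thing.
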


\begin{proof}
Assume $\pi:(\ps{M}{d},[p_0])\rightarrow (M,p_0)$ satisfies the unique lifting property. By Lemma~\ref{induced r is injective}, the induced homomorphism $\pi_\#$ is injective. Thus, it is enough to show that the image of the homomorphism $\pi_\#$ is trivial in the group $\pilip{1}(M,p_0)$ in order to show that $\pilip{1}(\ps{M}{d},[p_0])$ is trivial.

Let the homotopy class $[\alpha]$ of loops based at $p_0$ be in the image of the homomorphism $\pi_\#$. So, there exists a representative $\alpha:I\rightarrow M$ in $[\alpha]$ such that for some based loop $A:I\rightarrow\ps{M}{d}$, we have equality of maps $\alpha=\pi\circ A$. That is to say, the loop $A$ is a lift of the loop $\alpha$. By assumption, the map $\pi$ satisfies the unique lifting property. So, the lift of the loop $\alpha$  in $\ps{M}{d}$ must be $A=\r{\alpha}$, the lift of a path described in Section \ref{def of lift}. Since $\r{\alpha}$ is a loop, the initial and terminal points of the path $\r{\alpha}$ are equal, 
\[
[p_0]=\r{\alpha}(0)=\r{\alpha}(1)=[\alpha].
\]
Thus, $\alpha$ is null-homotopic and therefore the image of $\pi_\#$ is the trivial subgroup in $\pilip{1}(M,p_0)$.
\end{proof}

Theorem~\ref{upl implies simply connected} offers a converse to the lifting requirements of the unique path lifting property, which we state as the following result of independent interest.

\begin{corollary}
Suppose that endpoint projection $\pi:(\ps{M}{d},[p_0])\rightarrow (M,p_0)$ satisfies the unique lifting property. Let $X$ be a based quasi-convex metric space and let $f:X\rightarrow M$ be a based Lipschitz map. Then, there exists a unique lift $\r{f}:X\rightarrow\ps{M}{d}$ of the map $f$ if and only if the homomorphism induced by $f$ between first Lipschitz homotopy groups
\[
f_\#:\pilip{1}(X,x_0) \longrightarrow \pilip{1}(M,p_0)
\]
is trivial at $[p_0]$.
\end{corollary}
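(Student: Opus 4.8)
The plan is to prove the two implications separately, leaning almost entirely on the two main results already established in this subsection, namely Theorem~\ref{unique lifting} and Theorem~\ref{upl implies simply connected}.

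For the reverse implication, I would suppose that $f_\#$ is trivial at $[p_0]$. Since $\pi$ satisfies the unique lifting property, it satisfies the unique path lifting property (as noted immediately after Definition~\ref{unique path lifting definition}). Then every hypothesis of Theorem~\ref{unique lifting} is in place: $X$ is quasi-convex, $f:(X,x_0)\to(M,p_0)$ is Lipschitz, $f_\#$ is trivial at $[p_0]$, and $\ps{M}{d}$ satisfies the unique path lifting property. Theorem~\ref{unique lifting} then supplies a lift $\r{f}$ and, by its final clause, guarantees that this lift is unique. So this direction is essentially a restatement of Theorem~\ref{unique lifting}.

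For the forward implication, I would suppose that a lift $\r{f}:X\to\ps{M}{d}$ of $f$ exists, so that $\pi\circ\r{f}=f$. By functoriality of the induced homomorphism one has $f_\#=(\pi\circ\r{f})_\#=\pi_\#\circ\r{f}_\#$, where $\r{f}_\#:\pilip{1}(X,x_0)\to\pilip{1}(\ps{M}{d},[p_0])$. The key point is that since $\pi$ satisfies the unique lifting property, Theorem~\ref{upl implies simply connected} shows that $\ps{M}{d}$ is Lipschitz simply connected, i.e.\ $\pilip{1}(\ps{M}{d},[p_0])$ is the trivial group. Hence $\r{f}_\#$ maps into the trivial group and is itself trivial, so $f_\#=\pi_\#\circ\r{f}_\#$ is trivial at $[p_0]$ as well. (Note that only the existence of the lift, not its uniqueness, is used here.)

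I expect no serious obstacle, as both directions reduce to invoking existing results. The only substantive observation is the factorization $f_\#=\pi_\#\circ\r{f}_\#$ combined with the Lipschitz simple connectivity of $\ps{M}{d}$. The one point requiring mild care is verifying, directly from the defining formula $g_\#[\alpha]=[g\circ\alpha]$, that the induced homomorphism respects composition, so that the factorization $(\pi\circ\r{f})_\#=\pi_\#\circ\r{f}_\#$ is legitimate; this is a routine check of the form $\pi_\#(\r{f}_\#[\alpha])=\pi_\#[\r{f}\circ\alpha]=[\pi\circ\r{f}\circ\alpha]=[f\circ\alpha]=f_\#[\alpha]$.
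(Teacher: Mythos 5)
Your proposal is correct and follows essentially the same route as the paper: the forward direction is word-for-word the paper's argument (factor $f_\#=\pi_\#\circ\r{f}_\#$ and kill $\r{f}_\#$ via the Lipschitz simple connectivity of $\ps{M}{d}$ from Theorem~\ref{upl implies simply connected}), and your parenthetical remark that only existence of the lift is needed is also accurate. The only cosmetic difference is in the easy direction, where the paper simply cites Definition~\ref{unique path lifting definition} (the unique lifting property literally asserts the existence of a unique lift for such $f$), whereas you re-derive it through Theorem~\ref{unique lifting}; both are valid.
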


\begin{proof}
Since the map $\pi:(\ps{M}{d},[p_0])\rightarrow (M,p_0)$ satisfies the unique lifting property, if $f_\#$ is trivial, then there is a unique lift of $f$ (See Definition~\ref{unique path lifting definition}). 

Now assume that there is a unique lift of the map $f$. The map $f$ can be expressed as the composition $\pi\circ\r{f}$. Thus, the map induced on first Lipschitz homotopy groups can also be expressed as a composition:
\[
f_\#=(\pi\circ \r{f})_\#=\pi_\#\circ\r{f}_\#.
\]
Since, by Theorem~\ref{upl implies simply connected}, the universal path space $\ps{M}{d}$ is Lipschitz simply connected, the homomorphism $\r{f}_\#$ is trivial. Therefore, so is the homomorphism $f_\#$.
\end{proof}

\subsection{The universal property of the universal path space}


Before turning our attention to the universal path space over a $\D^2$-unrectifiable metric space, we establish a universal property for the universal path space. First, we return to the metric structure of the universal path space and argue that $\ps{M}{d}$ is a length space whenever it is a metric space. In the following proof, the path $A$ is a lift of the path $\alpha$, though this fact has no bearing on the argument.

\begin{theorem} \label{lem:length}
For a based metric space $M$ such that $\ps{M}{d}$ is a metric space, the universal path space $\ps{M}{d}$ is a length space. 
\end{theorem}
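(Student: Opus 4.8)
The plan is to verify directly the length-space condition of Definition~\ref{quasi convex}: for any two points $[\gamma_1],[\gamma_2]\in\ps{M}{d}$ one must show
\[
\dps([\gamma_1],[\gamma_2])=\inf\left\{~\length{M}(\Gamma)~:~\Gamma:I\to\ps{M}{d}\text{ joins }[\gamma_1]\text{ to }[\gamma_2]~\right\}.
\]
One inequality is free: in any metric space the length of a path is at least the distance between its endpoints, so $\dps([\gamma_1],[\gamma_2])\leq\length{M}(\Gamma)$ for every such $\Gamma$, and hence $\dps([\gamma_1],[\gamma_2])$ bounds the infimum from below. The whole content is therefore the reverse inequality: given $\varepsilon>0$, I must produce a genuine Lipschitz path $\Gamma$ in $\ps{M}{d}$ from $[\gamma_1]$ to $[\gamma_2]$ with $\length{M}(\Gamma)<\dps([\gamma_1],[\gamma_2])+\varepsilon$.

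First I would unwind the infimum defining $\dps$ to select a path $\beta$ in $M$ with $\beta\simeq\overline{\gamma_1}\ast\gamma_2$ and $\length{M}(\beta)<\dps([\gamma_1],[\gamma_2])+\varepsilon$; after reparametrizing I may assume $\beta$ is Lipschitz with domain $I$, joining $\gamma_1(1)$ to $\gamma_2(1)$. The idea is then to ``drag the endpoint'' of $\gamma_1$ along $\beta$. For $s\in I$ let $\beta_s$ denote the reparametrization $u\mapsto\beta(su)$ of $\beta|_{[0,s]}$, and define
\[
\begin{array}{rccl}
\Gamma: & I & \to & \ps{M}{d} \\
 & s & \mapsto & [\gamma_1\ast\beta_s].
\end{array}
\]
Up to the fixed prefix $\gamma_1$, this $\Gamma$ is exactly the lift of $\beta$ constructed in Section~\ref{def of lift}, so I expect its behaviour to be governed by the same kind of estimate as in Lemma~\ref{the lift is Lipschitz}.

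The verification splits into endpoints and a length bound. At $s=0$ the path $\beta_0$ is constant at $\gamma_1(1)$, so $\gamma_1\ast\beta_0\simeq\gamma_1$ and $\Gamma(0)=[\gamma_1]$; at $s=1$ one has $\gamma_1\ast\beta_1=\gamma_1\ast\beta\simeq\gamma_1\ast(\overline{\gamma_1}\ast\gamma_2)\simeq\gamma_2$, so $\Gamma(1)=[\gamma_2]$. For the key estimate, fix $s\leq t$ and compute, manipulating homotopy classes as in the proof of Theorem~\ref{free and isometric action},
\[
\overline{\gamma_1\ast\beta_s}\ast(\gamma_1\ast\beta_t)\simeq\overline{\beta_s}\ast(\overline{\gamma_1}\ast\gamma_1)\ast\beta_t\simeq\overline{\beta_s}\ast\beta_t\simeq\beta|_{[s,t]},
\]
the last equivalence holding because $\beta_t\simeq\beta_s\ast\beta|_{[s,t]}$ and $\overline{\beta_s}\ast\beta_s$ is null-homotopic. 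Since the reparametrized $\beta|_{[s,t]}$ is thus an admissible competitor in the infimum defining $\dps$, I obtain $\dps(\Gamma(s),\Gamma(t))\leq\length{M}(\beta|_{[s,t]})$. This pointwise estimate simultaneously shows $\Gamma$ is Lipschitz (since $\beta$ is), hence a legitimate path, and, by summing over any partition of $I$ and using additivity of length, yields $\length{M}(\Gamma)\leq\length{M}(\beta)<\dps([\gamma_1],[\gamma_2])+\varepsilon$. Letting $\varepsilon\to0$ completes the argument.

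The main obstacle is bookkeeping rather than conceptual: making the chain of equivalences $\overline{\gamma_1\ast\beta_s}\ast(\gamma_1\ast\beta_t)\simeq\beta|_{[s,t]}$ fully precise, including the reparametrizations hidden inside associativity and inside the splitting $\beta_t\simeq\beta_s\ast\beta|_{[s,t]}$, and then correctly passing from the pointwise bound $\dps(\Gamma(s),\Gamma(t))\leq\length{M}(\beta|_{[s,t]})$ to the global length bound $\length{M}(\Gamma)\leq\length{M}(\beta)$ through the definition of length as a supremum over partitions. I note that the hypothesis that $\ps{M}{d}$ be a metric space is used only so that the phrase ``length space'' is meaningful; the construction of $\Gamma$ and all the estimates never invoke positive-definiteness of $\dps$.
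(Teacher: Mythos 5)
Your proposal is correct and follows essentially the same route as the paper's proof: you lift a near-optimal representative of $[\overline{\gamma_1}\ast\gamma_2]$ to a path in $\ps{M}{d}$ by concatenating a fixed prefix with reparametrized initial segments (the paper writes this mirror-image as $t\mapsto[\eta\ast\overline{\alpha}_{1-t}]$), and you bound $\dps(\Gamma(s),\Gamma(t))$ by $\length{M}(\beta|_{[s,t]})$ via the same homotopy-class manipulation. The only cosmetic differences are that the paper uses a multiplicative constant $C>1$ and an arc-length parametrization to bound $\length{\ps{M}{d}}(A)$ by $\Lip(\alpha)$, whereas you use an additive $\varepsilon$ and sum the pointwise estimate over partitions; both are valid.
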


\begin{proof}
Let $[\gamma],[\eta] \in \ps{M}{d}$. Let $C > 1$ and choose a representative $\alpha \in [\overline{\gamma} * \eta]$ such that $\length{M}(\alpha) \leq C \cdot \dps([\gamma],[\eta])$. By choosing an arc length parametrization, we may assume that $\lip{\alpha} = \length{M}(\alpha)$. 

Consider the path in $A:I \to \ps{M}{d}$ given by $t \mapsto [\eta * \overline{\alpha}_{1-t}]$, where $\overline{\alpha}_{1-t}$ is the inverse path $\overline{\alpha}$ reparametrized  with respect to the value $1-t\in I$ via the method described above. The path $A$ starts at $[\gamma]$ and ends at $[\eta]$. As will now be verified, the path $A$ is in fact Lipschitz. To see this, let $s,t\in I$ where $s<t$ and consider
\[
\dps(A(s),A(t))=\inf\left\{\length{M}(\beta)~:~\beta\simeq \overline{(\eta*\overline{\alpha}_{1-s})}*(\eta*\overline{\alpha}_{1-t})\right\}
\]
The path $\alpha|_{[s,t]}$ is among the paths $\beta$ that the infimum is taken with respect to, as
\begin{eqnarray*}
\overline{(\eta*\overline{\alpha}_{1-s})}*(\eta*\overline{\alpha}_{1-t}) & \simeq & (\overline{\overline{\alpha}_{1-s}}*\overline{\eta})*(\eta*\overline{\alpha}_{1-t}) \\
& \simeq &  \overline{\overline{\alpha}_{1-s}}*\overline{\alpha}_{1-t} \\
& \simeq & \alpha|_{[s,t]}.
\end{eqnarray*}
 So,
\[
\dps(A(s),A(t))\leq\length{M}(\alpha|_{[s,t]})\leq\lip{\alpha}\cdot|s-t|.
\]
Thus, $A$ is Lipschitz and $\lip{A}\leq\lip{\alpha}$.

Therefore, the length of the path $A$ is bounded above by
\[
\length{\ps{M}{d}}(A)\leq \lip{A}\leq\lip{\alpha} \leq C \cdot \dps([\gamma],[\eta]).
\]
 Since $C > 1$ was arbitrary, the universal path space $\ps{M}{d}$ is a length space.
\end{proof}

\begin{theorem}[Universal Property]
Let $(M,p_0)$ be a based metric space such that the map $\pi:(\ps{M}{d},[p_0])\rightarrow (M,p_0)$ satisfies the unique lifting property and the universal path space $\ps{M}{d}$ is a metric space. Let $(E,e_0)$ be a based Lipschitz simply connected quasi-convex space. Suppose that $r:(E,e_0)\rightarrow (M,p_0)$ is a based Lipschitz map satisfying the unique lifting property.
Then the metric spaces $E$ and $\ps{M}{d}$ are biLipschitz equivalent with commutative diagram
\begin{center}
\begin{tikzcd}
(E,e_0) \arrow[dr, "r"']\arrow[rr, "\cong"] & & (\ps{M}{d},[p_0])\arrow[dl, "\pi"] \\ 
& (M,p_0). &
\end{tikzcd}
\end{center}
If in addition, $M$ is a geodesic space, $E$ is a length space, $\Lip(r)= 1$, and for any based Lipschitz map $f:(X,x_0)\rightarrow(M,p_0)$ with length space domain which induces the trivial homomorphism $f_{\#}$, the lift $\r{f}$ of the map $f$ with respect to $r:E\rightarrow M$ satisfies $\Lip(\r{f})=\Lip(f)$, then $E$ and $\ps{M}{d}$ are isometric.
\end{theorem}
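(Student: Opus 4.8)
The plan is to build mutually inverse based Lipschitz maps between $E$ and $\ps{M}{d}$ by playing the two unique lifting properties against each other, exactly as one proves that an object characterized by a universal property is unique up to isomorphism. First I would lift $r$ through $\pi$: since $E$ is Lipschitz simply connected, $\pilip{1}(E,e_0)$ is trivial, so $r_\#$ is trivial, and $E$ is quasi-convex by hypothesis; thus the unique lifting property of $\pi$ (Definition~\ref{unique path lifting definition}) produces a unique based Lipschitz map $\Phi:E\to\ps{M}{d}$ with $\pi\circ\Phi=r$. Symmetrically I would lift $\pi$ through $r$. For this $\ps{M}{d}$ must be an admissible domain, and here the two preparatory results do the real work: by Theorem~\ref{lem:length} the space $\ps{M}{d}$ is a length space, hence quasi-convex, and by Theorem~\ref{upl implies simply connected} it is Lipschitz simply connected, so $\pi_\#$ is trivial. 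Hence the unique lifting property of $r$ produces a unique based Lipschitz map $\Psi:\ps{M}{d}\to E$ with $r\circ\Psi=\pi$.

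Next I would show $\Phi$ and $\Psi$ are inverse to each other purely by uniqueness. The composite $\Psi\circ\Phi:E\to E$ satisfies $r\circ(\Psi\circ\Phi)=\pi\circ\Phi=r$, so it is a lift of $r$ with respect to $r$; but $\id_E$ is another such lift, and since $r$ is an admissible map for its own unique lifting property (quasi-convex domain, trivial $r_\#$), uniqueness forces $\Psi\circ\Phi=\id_E$. The identical argument applied to $\Phi\circ\Psi:\ps{M}{d}\to\ps{M}{d}$, which is a lift of $\pi$ with respect to $\pi$, gives $\Phi\circ\Psi=\id_{\ps{M}{d}}$. Therefore $\Phi$ is a Lipschitz bijection with Lipschitz inverse $\Psi$, i.e. a biLipschitz equivalence, and the triangle commutes because $\pi\circ\Phi=r$ by construction.

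For the isometry refinement I would pin both Lipschitz constants to $1$. As $E$ is a length space, Theorem~\ref{unique lifting} applied to the lift $\Phi$ of $r$ through $\pi$ gives $\Lip(\Phi)=\Lip(r)=1$. As $\ps{M}{d}$ is a length space (Theorem~\ref{lem:length}) and $\pi_\#$ is trivial, the added hypothesis on lifts with respect to $r$ gives $\Lip(\Psi)=\Lip(\pi)$, and $\Lip(\pi)=1$ since $M$ is geodesic (Lemma~\ref{endpoint projection is a metric map}). With $\Lip(\Phi)\le 1$, $\Lip(\Psi)\le 1$, and $\Psi=\Phi^{-1}$, the sandwich $d_E(x,y)=d_E(\Psi\Phi x,\Psi\Phi y)\le\dps(\Phi x,\Phi y)\le d_E(x,y)$ shows $\Phi$ preserves distances, so $\Phi$ is an isometry.

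The argument is essentially formal, so the only genuine obstacle is hygiene: at every invocation I must confirm that the map being lifted is a legitimate input, namely that its domain is quasi-convex and its induced homomorphism on $\pilip{1}$ is trivial. This is precisely where Theorem~\ref{lem:length} and Theorem~\ref{upl implies simply connected} are indispensable, since without them the construction of $\Psi$ — and hence the symmetry that drives the whole proof — could not even be started.
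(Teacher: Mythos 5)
Your proposal is correct and follows essentially the same route as the paper: both construct the two lifts $\r{r}$ (your $\Phi$) and $\r{\pi}$ (your $\Psi$) from the two unique lifting properties, use uniqueness of lifts against the identity maps to conclude they are mutually inverse biLipschitz equivalences, and then pin both Lipschitz constants to $1$ (via Lemma~\ref{endpoint projection is a metric map}, Theorem~\ref{unique lifting}, and the added hypothesis on lifts with respect to $r$) to upgrade to an isometry. The only difference is cosmetic: you spell out the uniqueness argument for $\Psi\circ\Phi=\id_E$ and $\Phi\circ\Psi=\id_{\ps{M}{d}}$, which the paper leaves implicit.
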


\begin{proof}
First, some observations about the universal path space $\ps{M}{d}$. 
By Theorem~\ref{upl implies simply connected}, the universal path space is Lipschitz simply connected. Thus, the homomorphism $\pi_\#:\pilip{1}(\ps{M}{d},[p_0])\rightarrow\pilip{1}(M,p_0)$ is trivial. Also, by Theorem~\ref{lem:length}, the universal path space is a length space, and thus quasi-convex.

Since the map $r:(E,e_0)\rightarrow (M,p_0)$ satisfies the unique lifting property, there is a lift $\r{\pi}:(\ps{M}{d},[p_0])\rightarrow(E,e_0)$ of $\pi$ with respect to $r$ such that $r\circ\r{\pi}=\pi$. Likewise, since $\pi:(\ps{M}{d},[p_0])\rightarrow (M,p_0)$ satisfies the unique lifting property,
there is a lift $\r{r}:(E,e_0)\rightarrow(\ps{M}{d},[p_0])$ of $r$ with respect to $\pi$ such that $\pi\circ\r{r}=r$. Thus, by the unique lifting property, $\r{r}\circ\r{\pi}=\id_{\ps{M}{d}}$ and $\r{\pi}\circ\r{r}=\id_{E}$. So, the Lipschitz maps $\r{\pi}$ and $\r{r}$ are bijections and inverses of each other.

Additionally, the maps $\r{\pi}$ and $\r{r}$ are biLipschitz. Indeed, let $e_1, e_2\in E$ and denote by $d_E$ the metric on $E$. Then,
\begin{eqnarray*}
d_E(e_1, e_2)     & = & d_E(\r{\pi}\circ\r{r}(e_1),\r{\pi}\circ\r{r}(e_2)) \\
				& \leq & \Lip(\r{\pi})\cdot\dps(\r{r}(e_1),\r{r}(e_2)) \\
				& \leq & \Lip(\r{\pi})\cdot \Lip(\r{r})\cdot d_E(e_1, e_2).
\end{eqnarray*}
Thus, the map $\r{r}$ is biLipschitz as
\begin{equation}\label{biLipschitz r}
\frac{1}{\Lip(\r{\pi})}\cdot d_E(e_1, e_2)  \leq \dps(\r{r}(e_1),\r{r}(e_2)) \leq \Lip(\r{r})\cdot d_E(e_1, e_2).
\end{equation}
Similarly, the map $\r{\pi}$ is biLipschitz as, for $[\gamma_1],[\gamma_2]\in\ps{M}{d}$,
\begin{equation}\label{biLipschitz pi}
\frac{1}{\Lip(\r{r})}\cdot \dps([\gamma_1], [\gamma_2])  \leq d_E(\r{\pi}[\gamma_1],\r{\pi}[\gamma_2]) \leq \Lip(\r{\pi})\cdot \dps([\gamma_1], [\gamma_2]).
\end{equation}

Now, assume $M$ is a geodesic space, $E$ is a length space,  $\Lip(r)=1$, and $\Lip(\r{f})=\Lip(f)$ for any map $f:(X,x_0)\rightarrow(M,p_0)$ with length space domain satisfying the requirements of the unique lifting property with respect to $r:E\rightarrow M$. Since $M$ is a geodesic space, $\Lip(\pi)=1$ by Lemma~\ref{endpoint projection is a metric map}. Then, $\Lip(\r{\pi})=1$ since $\ps{M}{d}$ is a length space. Also, since $E$ is a length space, by Theorem~\ref{unique lifting}, $\Lip(\r{r})=1$. Thus, inequalities (\ref{biLipschitz r}) and (\ref{biLipschitz pi}) yield the following equalities for any $e_1, e_2\in E$ and $[\gamma_1], [\gamma_2]\in\ps{M}{d}$,
\[
\dps(\r{r}(e_1),\r{r}(e_2)) =d_E(e_1, e_2) \text{ and } d_E(\r{\pi}[\gamma_1],\r{\pi}[\gamma_2])=\dps([\gamma_1], [\gamma_2]).
\]
Therefore, $\r{r}$ and $\r{\pi}$ are isometries. 

\end{proof}

\section{The universal path space of tree-like homotopy spaces}\label{D2 section}



We now define a class of metric spaces whose homotopies are tree-like in that they exhibit properties similar to homotopies that factor through a metric tree. 

\begin{definition}\label{def TLH}
A metric space $M$ is a \textit{tree-like homotopy space} if, for any paths $\gamma:I\rightarrow M$ and $\beta:I\rightarrow M$ that are homotopic rel endpoints, there exists a Lipschitz map $H':I\times I\rightarrow M$ and a path $\beta':I\rightarrow M$ such that:
\begin{itemize}
\item the map $H'$ is a homotopy from $\gamma$ to $\beta'$,
\item $\Lip(H')=\Lip(\gamma)$, 
\item $\Ima(H')\subset\Ima(\gamma)$,
\item $\Lip(\beta')\leq \Lip(\gamma)$, 
\item $\Ima(\beta')\subset\Ima(\gamma)$, and
\item $\length{M}(\beta')\leq\length{M}(\beta)$.
\end{itemize}
Additionally, if $\beta$ is arc length parametrized and a length minimizing path in the homotopy class $[\gamma]$, then $\beta'=\beta$.
\end{definition}

As exhibited in Lemma 3.8 and Theorem 3.10 in \cite{perry2023existence}, purely 2-unrectifiable spaces are tree-like homotopy spaces. Thus, the first Heisenberg group $\H^1$ as well as any contact 3-manifold endowed with a sub-Riemannian structure is a tree-like homotopy space. See \cite{Amb} and \cite{perry2020lipschitz} for arguments for why these spaces  are respectively purely 2-unrectifiable. The results in \cite{perry2023existence} follow from homotpies factoring through a metric tree per a factorization result of Wenger and Young \cite[Theorem 5]{Weg}.

\subsection{Unique path lifting for the universal path space of a tree-like homotopy space}

Tree-like homotopy spaces have several properties that are sufficient for the associated universal path space to be a length space and satisfy the unique path lifting property, namely that each homotopy class has a length minimizing path and all points support only locally trivial representation, as will be shown. Versions of the next two results originally appeared in \cite{perry2023existence} for purely 2-unrectifiable spaces.

\begin{theorem}\label{core result}
Let $M$ be a tree-like homotopy space. For any homotopy class $[\gamma]$ in $M$, there exists a length minimizing path $\core{\gamma}\in[\gamma]$. 
Moreover, for any representative $\gamma\in[\gamma]$ in the class, $\Ima(\core{\gamma})\subset\Ima(\gamma)$.
\end{theorem}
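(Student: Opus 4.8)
The goal is to produce a length minimizing path $\core{\gamma}$ in each homotopy class $[\gamma]$, and to ensure its image sits inside the image of any representative. The key engine is the defining property of a tree-like homotopy space from Definition~\ref{def TLH}, which lets me replace a path by a ``no longer and no larger'' path whose image is contained in the image of a chosen starting representative, while keeping track of Lipschitz constants and lengths.

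The plan is to set $L\deq\inf\{\length{M}(\gamma)~:~\gamma\in[\gamma]\}$ and take a minimizing sequence of representatives $\gamma_n\in[\gamma]$ with $\length{M}(\gamma_n)\to L$. After reparametrizing each $\gamma_n$ by arc length, I have $\Lip(\gamma_n)=\length{M}(\gamma_n)$, so the sequence $\{\gamma_n\}$ is uniformly Lipschitz once $n$ is large (the lengths converge to $L$, hence are bounded). Since all $\gamma_n$ share common endpoints and have uniformly bounded Lipschitz constants, the Arzel\`a--Ascoli theorem yields a subsequence converging uniformly to a limit path $\core{\gamma}:I\to M$, which is Lipschitz with $\Lip(\core{\gamma})\leq L$ and therefore $\length{M}(\core{\gamma})\leq L$. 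The remaining points are that $\core{\gamma}$ actually lies in the class $[\gamma]$ (so that the infimum is attained and $\length{M}(\core{\gamma})\geq L$, forcing equality) and that its image can be placed inside $\Ima(\gamma)$ for an arbitrary representative $\gamma$.

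For the image containment, I would use Definition~\ref{def TLH} directly. Fixing an arbitrary representative $\gamma\in[\gamma]$, and letting $\beta$ range over the minimizing sequence (each homotopic rel endpoints to $\gamma$), the definition supplies homotopies $H'$ from $\gamma$ to paths $\beta'_n$ with $\Ima(\beta'_n)\subset\Ima(\gamma)$, $\Lip(\beta'_n)\leq\Lip(\gamma)$, and $\length{M}(\beta'_n)\leq\length{M}(\beta_n)$. So without loss of generality the entire minimizing sequence may be taken inside the compact set $\Ima(\gamma)$, and then the uniform limit $\core{\gamma}$ also has image in the closed set $\Ima(\gamma)$, giving the ``moreover'' clause. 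The clause of Definition~\ref{def TLH} about $\beta$ being arc length parametrized and length minimizing (forcing $\beta'=\beta$) is what guarantees this replacement does not destroy an already-optimal path, so that the limit is genuinely a length minimizer and not merely a length-nonincreasing perturbation.

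The main obstacle I expect is showing the limit $\core{\gamma}$ remains in the homotopy class $[\gamma]$: uniform convergence of Lipschitz maps does not automatically preserve the homotopy class in a general metric space, since there is no ambient linear structure to build a straight-line homotopy. Here is where I anticipate leaning again on the tree-like structure: because the paths can all be taken inside $\Ima(\gamma)$ and the homotopies themselves (the maps $H'$) have image in $\Ima(\gamma)$ with controlled Lipschitz constant, one should be able to concatenate the homotopy from $\gamma$ to $\beta'_n$ with a short ``correction'' homotopy between $\beta'_n$ and the uniform limit---the latter built from the uniform closeness and the tree-like (essentially one-dimensional) geometry of the image---to exhibit $\core{\gamma}\simeq\gamma$ rel endpoints. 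Verifying that this correction homotopy is genuinely Lipschitz is the delicate step, and I would expect the factorization-through-a-tree perspective cited from \cite{Weg} and \cite{perry2023existence} to be the cleanest route, since in a tree nearby paths with common endpoints are canonically homotopic via a Lipschitz homotopy.
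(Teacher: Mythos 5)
Your skeleton matches the paper's up to the decisive step: both take a minimizing sequence $(\gamma_n)\subset[\gamma]$, use Definition~\ref{def TLH} to replace it by a sequence lying in the compact set $\Ima(\gamma_1)$ with uniformly bounded Lipschitz constants, and extract a uniform limit $\core{\gamma}$ via Arzel\`a--Ascoli, with $\length{M}(\core{\gamma})\leq\lmin$ by lower semicontinuity. (Incidentally, the reduction into $\Ima(\gamma_1)$ must come \emph{before} invoking Arzel\`a--Ascoli: bounded Lipschitz constants and common endpoints alone do not give pointwise relative compactness in a general metric space.) But the step you yourself flag as the ``main obstacle'' --- showing $\core{\gamma}\in[\gamma]$ --- is a genuine gap in your proposal, and the strategy you sketch for it would fail. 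A ``correction homotopy'' between $\beta'_n$ and the uniform limit cannot be built from uniform closeness: in the spaces this theorem targets, uniformly close paths with common endpoints need not be Lipschitz homotopic rel endpoints, because arbitrarily small loops can be homotopically essential --- that is exactly the content of Theorem~\ref{no regular points} ($\H^1$ and all contact 3-manifolds are \emph{nowhere} Lipschitz semilocally simply connected). Concretely, inserting a tiny essential loop into $\core{\gamma}$ produces a path uniformly close to $\core{\gamma}$ yet not homotopic to it. Nor can you fall back on the factorization-through-a-tree results of \cite{Weg} and \cite{perry2023existence}: the theorem concerns abstract tree-like homotopy spaces, and those citations serve only to show purely 2-unrectifiable spaces are examples; the abstract definition supplies no tree to factor through. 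The paper's resolution is the idea your proposal is missing: Definition~\ref{def TLH} also provides, for each $n$, a homotopy $H_n$ from $\gamma_1$ to $\gamma_n$ with $\Lip(H_n)=\Lip(\gamma_1)$ and $\Ima(H_n)\subset\Ima(\gamma_1)$, so Arzel\`a--Ascoli applies to the \emph{homotopies themselves}; a subsequence converges uniformly to a Lipschitz map $\core{H}$, and uniform convergence preserves the four boundary conditions, so $\core{H}$ is a homotopy from $\gamma_1$ to $\core{\gamma}$. No closeness-implies-homotopic claim is ever needed.

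There is also a quantifier problem in your treatment of the ``moreover'' clause. You fix a representative $\gamma$ and rerun the limit construction inside $\Ima(\gamma)$; this yields, for each representative, \emph{some} minimizer with image in $\Ima(\gamma)$ --- possibly a different minimizer for different representatives. The statement, and its later use in the proof of Theorem~\ref{unique path lifting} (where a fixed minimizer $\beta_n^j$ must have image inside the image of a representative $\beta_{n+1}|_{I_n^j}$ constructed only afterwards), requires a \emph{single} $\core{\gamma}$ whose image lies in $\Ima(\gamma)$ for every representative $\gamma$. The paper obtains this from the final clause of Definition~\ref{def TLH}, which you cite but deploy for a different purpose: take $\core{\gamma}$ arc length parametrized; for any representative $\gamma$, applying the definition to the pair $(\gamma,\core{\gamma})$ forces $\beta'=\core{\gamma}$, whence $\Ima(\core{\gamma})=\Ima(\beta')\subset\Ima(\gamma)$. (Your use of that clause --- protecting an ``already-optimal path'' during the replacement step --- is not needed there, since replacements only decrease length and hence preserve the minimizing property of the sequence.)
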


\begin{proof}
Let $M$ be a tree-like homotopy space. Let $[\gamma]$ be a homotopy class of paths and define $\lmin\deq\inf\{\length{M}(\gamma)~|~\gamma\in[\gamma]\}$ to be the infimum of all lengths of paths in $[\gamma]$. 
 
For each natural number $n$, let $\gamma_n\in[\gamma]$ be a path such that $\length{M}(\gamma_n)\leq\lmin+\frac{1}{n}$. Furthermore, since $\gamma_1$ is homotopic rel endpoints to $\gamma_n$, by Definition~\ref{def TLH}, we can assume that $\Lip(\gamma_n)\leq\Lip(\gamma_1)$ and $\Ima(\gamma_n)\subset\Ima(\gamma_1)$. Additionally, there is a homotopy $H_n:I\times I\rightarrow M$ from $\gamma_1$ to $\gamma_n$ such that $\Lip(H_n)=\Lip(\gamma_1)$ and $\Ima(H_n)\subset\Ima(\gamma_1)$.

Now, for any $n\in\NN$, $\Lip(\gamma_n)\leq\Lip(\gamma_1)$. Since the images of the paths in the sequence $(\gamma_n)$ are subsets of the compact set $\Ima(\gamma_1)$, by Arzel\`{a}-Ascoli theorem, there exists a subsequence $(\gamma_{n_k})$ that uniformly converges to a Lipschitz path $\core{\gamma}$. By lower semi-continuity of the length measure, $\length{M}(\core{\gamma})\leq\liminf_k\length{M}(\gamma_{n_k})$. In fact, due to how the sequence $(\gamma_n)$ was selected, $\length{M}(\core{\gamma})\leq\lmin$. 

We now want to show that $\core{\gamma}\in[\gamma]$. Associated to the subsequence $(\gamma_{n_k})$, there is a sequence of homotopies $(H_{n_k})$ such that $\Lip(H_{n_k})=\Lip(\gamma_1)$ for each homotopy in the sequence. 
Since $\Ima(H_{n_k})\subset\Ima(\gamma_1)$ for each $n_k$ and $\Ima(\gamma_1)$ is compact, by Arzel\`{a}-Ascoli theorem, there exists a subsequence $(H_{n_{k_j}})$ that converges uniformly to a Lipschitz map $\core{H}:I\times I\rightarrow M$. 

Now, $\core{H}|_{I\times\{0\}}=\gamma_1$ since $H_{n_{k_j}}|_{I\times\{0\}}=\gamma_1$ for all $n_{k_j}$. Also, since the paths $H_{n_{k_j}}|_{I\times\{1\}}=\gamma_{n_{k_j}}$ converge uniformly to $\core{\gamma}$, then $\core{H}|_{I\times\{1\}}=\core{\gamma}$. So, the map $\core{H}$ is a homotopy from $\gamma_1$ to $\core{\gamma}$. Therefore, $\core{\gamma}\in[\gamma]$ and thus $\length{M}(\core{\gamma})=\lmin$.

Now, let $\gamma\in[\gamma]$. Assume that the length minimizing path $\core{\gamma}$ is arc length parametrized. Since $\gamma\simeq\core{\gamma}$, by Definition~\ref{def TLH}, there is a homotopy from $\gamma$ to $\core{\gamma}$ such that $\Ima(\core{\gamma})\subset\Ima(\gamma)$. 
\end{proof}

A consequence of Theorem~\ref{core result} is that a length minimzing path for a homotopy class is unique up to reparametrization.

%
%
%
%
%

\begin{corollary}\label{p2u has metric on path space}
For a tree-like homotopy space $M$, every point $p\in M$ supports only trivial local representation. Thus, if $M$ is a based space, the lifted pseudo-metric $\dps$ is a metric and the universal path space $\ps{M}{d}$ is a length space.
\end{corollary}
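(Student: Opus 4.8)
The plan is to prove Corollary~\ref{p2u has metric on path space} in two stages: first establish that every point supports only trivial local representation, then invoke the machinery already developed to conclude the metric and length-space claims. For the first stage, fix a point $p\in M$ and suppose $[\alpha]\in\pilip{1}(M,p)$ is locally representable; I must show $[\alpha]=[p]$. By Theorem~\ref{core result}, the homotopy class $[\alpha]$ contains a length minimizing path $\core{\alpha}$, and crucially, for \emph{any} representative $\alpha\in[\alpha]$ we have $\Ima(\core{\alpha})\subset\Ima(\alpha)$. This containment is the key tool: since $[\alpha]$ is locally representable, for every $\varepsilon>0$ there is a representative $\alpha_\varepsilon\in[\alpha]$ with $\Ima(\alpha_\varepsilon)\subset B(p,\varepsilon)$, and hence $\Ima(\core{\alpha})\subset\Ima(\alpha_\varepsilon)\subset B(p,\varepsilon)$.

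Since $\core{\alpha}$ is a single fixed path independent of $\varepsilon$, the containment $\Ima(\core{\alpha})\subset B(p,\varepsilon)$ holding for all $\varepsilon>0$ forces $\Ima(\core{\alpha})=\{p\}$. Thus $\core{\alpha}$ is the constant path at $p$, which has length zero and is Lipschitz null-homotopic. Therefore $[\alpha]=[\core{\alpha}]=[p]$, the trivial class. This shows $p$ supports only trivial local representation, and since $p$ was arbitrary, the conclusion holds at every point of $M$.

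For the second stage, the two remaining claims follow immediately from results already in hand. Since every point supports only trivial local representation, Lemma~\ref{universal path space is a metric space} applies directly to give that the lifted pseudo-metric $\dps$ on $\ps{M}{d}$ is in fact a metric. Then, with $\ps{M}{d}$ established as a metric space, Theorem~\ref{lem:length} yields that $\ps{M}{d}$ is a length space. Both invocations are essentially one line each.

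The main obstacle, and really the only substantive point, is recognizing that the length minimizing path $\core{\alpha}$ is a \emph{fixed} object whose image is simultaneously trapped inside every ball $B(p,\varepsilon)$; the universal-quantifier-over-$\varepsilon$ step relies entirely on the fact that Theorem~\ref{core result} gives $\Ima(\core{\gamma})\subset\Ima(\gamma)$ for every representative $\gamma$, not merely for one. One technical subtlety to address is that Theorem~\ref{core result} states the containment $\Ima(\core{\gamma})\subset\Ima(\gamma)$ under the hypothesis that $\core{\gamma}$ is arc length parametrized; I would simply fix such an arc length parametrization of $\core{\alpha}$ at the outset, which is harmless since length minimizers are unique up to reparametrization as noted after Theorem~\ref{core result}.
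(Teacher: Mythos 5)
Your proof is correct and follows essentially the same route as the paper: apply Theorem~\ref{core result} to trap the image of the length minimizer $\core{\alpha}$ inside every ball $B(p,\varepsilon)$, conclude $\core{\alpha}$ is constant so $[\alpha]=[p]$, then invoke Lemma~\ref{universal path space is a metric space} and Theorem~\ref{lem:length}. Your added care about fixing an arc length parametrization of $\core{\alpha}$ is a reasonable elaboration of a step the paper leaves implicit, but it is not a different argument.
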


\begin{proof}
Let $p\in M$ and $[\alpha]\in\pilip{1}(M,p)$ be a locally representable homotopy class of loops based at $p$. Then, by Theorem~\ref{core result}, $[\alpha]$ has a length minimizer $\core{\alpha}$ whose image is a subset of every neighborhood of $p$. Therefore, $\core{\alpha}$ is the constant loop at $p$ and thus $[\alpha]=[p]$.

Now assume for the purpose of defining the universal path space that $M$ is based. Since every point in $M$ supports only locally trivial representation, by Lemma~\ref{universal path space is a metric space}, the lifted pseudo-metric $\dps$ is a metric. By Theorem~\ref{lem:length}, the universal path space $\ps{M}{d}$ is a length space.
\end{proof}



{

\begin{theorem}\label{unique path lifting}
For any based tree-like homotopy space $(M,p_0)$ with metric $d$, the universal path space $\ps{M}{d}$ has the unique path lifting property.
\end{theorem}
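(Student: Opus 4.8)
The plan is to show that for any based path $\gamma:(I,0)\rightarrow(M,p_0)$, the lift $\r{\gamma}$ constructed in Section~\ref{def of lift} is the \emph{only} lift. Since existence of $\r{\gamma}$ is already established in Lemma~\ref{the lift is Lipschitz}, the entire content is uniqueness. So I would begin by letting $G:(I,0)\rightarrow(\ps{M}{d},[p_0])$ be an arbitrary lift of $\gamma$, meaning $\pi\circ G=\gamma$ and $G(0)=[p_0]$, and aim to prove $G=\r{\gamma}$ as maps $I\to\ps{M}{d}$, i.e.\ $G(t)=[\gamma_t]_{p_0}^{\gamma(t)}$ for every $t\in I$.

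\textbf{The key idea: reduce to comparing two loops and kill them with Theorem~\ref{core result}.}
Fix $t\in I$. Both $G(t)$ and $\r{\gamma}(t)$ are homotopy classes of paths from $p_0$ to $\gamma(t)$ (they have the same endpoint because $\pi\circ G=\gamma=\pi\circ\r\gamma$). Write $G(t)=[\sigma_t]$ for some representative $\sigma_t$. Then the concatenation $\overline{\sigma_t}*\gamma_t$ is a \emph{loop} based at $p_0$, and proving $G(t)=\r\gamma(t)$ is equivalent to showing this loop is null-homotopic, i.e.\ $[\overline{\sigma_t}*\gamma_t]=[p_0]$ in $\pilip1(M,p_0)$. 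The mechanism I expect to drive this is the metric: because $G$ is continuous (indeed Lipschitz) and $\r\gamma$ is continuous, the function $t\mapsto \dps(G(t),\r\gamma(t))$ is continuous, it vanishes at $t=0$, and I want to show the set $\{t:G(t)=\r\gamma(t)\}$ is both open and closed, hence all of $I$ by connectedness. Closedness is immediate from continuity of $\dps(G(\cdot),\r\gamma(\cdot))$ once I know equality is equivalent to distance zero (which holds because $\dps$ is a genuine \emph{metric} here, by Corollary~\ref{p2u has metric on path space}). For openness, near a point $t_0$ with $G(t_0)=\r\gamma(t_0)$, I compare $G(t)$ to $G(t_0)$ and $\r\gamma(t)$ to $\r\gamma(t_0)$: the two classes $G(t)$ and $\r\gamma(t)$ differ by a small loop whose length minimizer, by Theorem~\ref{core result}, has image inside a small ball around $\gamma(t_0)$; since such a loop is locally representable and every point supports only trivial local representation (Corollary~\ref{p2u has metric on path space}), it must be trivial, forcing $G(t)=\r\gamma(t)$ on a neighborhood of $t_0$.

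\textbf{Making the local step precise.}
The crux is the local argument, so I would spell it out as follows. Suppose $G(t_0)=\r\gamma(t_0)$. For $t$ near $t_0$, the small subpath $\gamma|_{[t_0,t]}$ of $\gamma$ is homotopic rel endpoints to $\overline{\gamma_{t_0}}*\gamma_t$, so $\r\gamma(t)$ and $\r\gamma(t_0)$ differ by the short path $\gamma|_{[t_0,t]}$, and $\dps(\r\gamma(t),\r\gamma(t_0))\le \Lip(\gamma)|t-t_0|$. The same short bound holds for $G$ because $G$ is Lipschitz. Hence the loop $\ell_t:=\overline{\sigma_t}*\gamma_t$ representing the ``discrepancy class'' $G(t)^{-1}\r\gamma(t)$ (in the groupoid sense) can be built from two short arcs and so has a representative of length at most $2\Lip(G)|t-t_0|$, confined to a ball $B(\gamma(t_0),r)$ with $r\to0$. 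Because $M$ is a tree-like homotopy space, this loop-class has a length minimizer whose image also lies in that arbitrarily small ball; as $t\to t_0$ this exhibits the discrepancy class as locally representable at $\gamma(t_0)$, hence trivial. Therefore $G(t)=\r\gamma(t)$ on a neighborhood of $t_0$, giving openness.

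\textbf{Where the difficulty lies.}
I expect the main obstacle to be the bookkeeping that turns ``$G$ and $\r\gamma$ are close and agree at $t_0$'' into ``their discrepancy class is genuinely locally representable at the single point $\gamma(t_0)$,'' since local representability is defined via nested neighborhoods of \emph{one} point, whereas the natural estimate produces loops based at $p_0$ that merely \emph{travel} near $\gamma(t_0)$. Reconciling this requires carefully splicing the fixed tail $\r\gamma(t_0)=G(t_0)$ against its inverse so that the surviving discrepancy is a genuine short loop localized at $\gamma(t_0)$, and then invoking Corollary~\ref{p2u has metric on path space} to conclude triviality. Once that localization is set up cleanly, the open-closed connectedness argument finishes the proof with no further analytic input.
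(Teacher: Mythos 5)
Your reduction and the closedness half of your connectedness argument are fine, but the openness step contains a genuine gap that cannot be repaired along the lines you sketch. At a fixed $t$ near $t_0$, all your estimate gives is that the discrepancy class (the class of the loop $\overline{\sigma_t}*\gamma_t$, which is based at $\gamma(t)$, not at $p_0$) admits \emph{one} representative of length $\lesssim|t-t_0|$, hence contained in \emph{one} ball of that radius. Local representability (Definition~\ref{nonsingular def}) demands representatives in \emph{every} neighborhood of the base point, and nothing in your argument produces representatives of this single fixed class at all scales: as $t\to t_0$ you are looking at a \emph{different} class for each $t$, each controlled only at its own scale $|t-t_0|$. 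So Corollary~\ref{p2u has metric on path space} cannot be invoked to kill the discrepancy. Worse, the implication you actually need --- ``short loop confined to a small ball $\Rightarrow$ null-homotopic'' --- is provably false in exactly the spaces this theorem targets: the paper's own Theorem~\ref{no regular points} shows that $\H^1$ (and every sub-Riemannian contact 3-manifold) is \emph{nowhere} Lipschitz semilocally simply connected, so every ball $B(\gamma(t_0),r)$, however small, contains loops that are not null-homotopic in $M$. Your proposal uses the tree-like hypothesis only through the fact that $\dps$ is a metric, and that is too weak to force the distance $\dps(G(t),\r{\gamma}(t))$, once small, to be zero.

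The paper's proof is global rather than local--to--global, and it needs the tree-like structure in an essential quantitative way. Fixing an arbitrary lift $\Gamma$ with $L=\Lip(\Gamma)$, it reduces to showing $\Gamma(1)=[\gamma]$, then for each $n$ builds a representative $\beta_n\in\Gamma(1)$ by concatenating, over the dyadic subintervals $I_n^j$, length-minimizing arcs $\beta_n^j$ of length $<L/2^{n-1}$ joining $\gamma\bigl(\frac{j-1}{2^n}\bigr)$ to $\gamma\bigl(\frac{j}{2^n}\bigr)$ (existence and uniform control come from the definition of $\dps$ and Theorem~\ref{core result}). The key steps you have no analogue of are: (i) a uniform Lipschitz bound $\Lip(\beta_n)<2L$; (ii) the nesting $\Ima(\beta_n^j)\subset\Ima\bigl(\beta_{n+1}|_{I_n^j}\bigr)$ coming from length minimality, which after iteration yields $\Ima(\beta_n)\subset\Ima(\gamma)$; and (iii) the tree-like homotopy property supplying homotopies $H_n$ from $\beta_n$ to $\beta_0$ with $\Lip(H_n)<2L$ and $\Ima(H_n)\subset\Ima(\gamma)$, so that Arzel\`{a}--Ascoli produces a limit homotopy $H_\infty$ from $\gamma$ to $\beta_0$, proving $\gamma\in\Gamma(1)$. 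It is this compactness argument on homotopies --- not any smallness-of-loops principle --- that closes the gap your openness step leaves open.
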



\begin{proof}
Let $\gamma:(I,0)\rightarrow (M,p_0)$ be a based path. The path $\r{\gamma}$ is a lift of $\gamma$ (Lemma~\ref{the lift is Lipschitz}). Let $\Gamma:(I,0)\rightarrow(\ps{M}{d},[p_0])$ be another lift of the path $\gamma$ with Lipschitz constant $L=\Lip(\Gamma)$. We aim to show that $\Gamma=\r{\gamma}$. 

It is enough to show, for arbitrary based path $\gamma:(I,0)\rightarrow(M,p_0)$ and lift $\Gamma$, that $\gamma$ is a representative for the homotopy class $\Gamma(1)$, that is, $\Gamma(1)=[\gamma]=\r{\gamma}(1)$. For any $t\in I$, the equality $\Gamma(t)=\r{\gamma}(t)$ is a consequence of the case when $t=1$. Indeed, consider the path $\Gamma_t:I\rightarrow\ps{M}{d}$ defined by $\Gamma_t(s)=\Gamma(st)$.  Then, the map $\Gamma_t$ is based, Lipschitz, and for any $s\in I$,
\[
\pi\circ\Gamma_t(s)=\pi\circ\Gamma(st)=\gamma(st)=\gamma_t(s).
\]
So, $\Gamma_t$ is a lift of $\gamma_t$. Thus,
\[
\Gamma(t)=\Gamma_t(1)=[\gamma_t]=\r{\gamma}(t).
\]

We begin by constructing, for each non-negative integer $n\in\mathbb{N}_0$, a representative $\beta_n$ of $\Gamma(1)$ that is never more than $\frac{L}{2^{n-1}}$ away from the path $\gamma$. The sequence of paths $(\beta_n)$ can be thought of as successively better approximations of the path $\gamma$.

Let $n\in\mathbb{N}_0$. Partition the interval $I$ into $2^n$ subintervals, each of width $\frac{1}{2^n}$. For $j\in\{1,2,\ldots,2^n\}$, let $I_{n}^{j}\deq\left[\frac{j-1}{2^n},\frac{j}{2^n} \right]$ be the $j$th subinterval of the $n$th partition.

We proceed by induction on $j\in\{1,2,\ldots, 2^n\}$ to show that there exists a representative $\beta_n^{1,j}\in\Gamma\left(\frac{j}{2^n}\right)$ with domain $\left[0,\frac{j}{2^n}\right]$ and, for $i\in\{1,\ldots,j\}$, paths $\beta_n^i:I_{n}^{i}\rightarrow M$ with initial point $\gamma\left(\frac{i-1}{2^n}\right)$ and endpoint $\gamma\left(\frac{i}{2^n}\right)$ such that
\begin{itemize}
\item  $\ds\beta_n^{1,j}\left|_{I_{n}^{i}}\right.=\beta_n^i$, 
\item $\ds \length{M}\left(\beta_n^{i}\right)<\frac{L}{2^{n-1}}$, and
\item $\ds\Ima\left(\beta_n^i\right)=\Ima\left(\beta_n^{1,j}\left|_{I_{n}^{i}}\right.\right)\subset B\left(\gamma\left(\frac{j-1}{2^n}\right), \frac{L}{2^{n-1}} \right)$.
\end{itemize}

\begin{figure}
\begin{center}
\begin{tikzpicture}
]
\begin{axis}[
 axis line style={draw=none},
tick style={draw=none},
  ymin=-.75, ymax=1.4, ytick=\empty, 
  xmin=-5, xmax=7.5, xtick=\empty, 
  domain=-1:6,samples=101, 
]

\addplot[domain=-4:7.5, blue, thick]{(-0.01)*(x)*(x-6)} node[below left]{$\gamma$};
\addplot[domain=-4.02:-3, thick, name path=E]{(-0.25)*(x+4)*(x+0.6)-0.4} node[right]{\scriptsize $\beta_{n}^{1}$};
\addplot[domain=-3:-1, thick, name path=F]{(-0.25)*(x+4)*(x+0.6)-0.4};
\addplot[domain=-1:-0.5, thick, name path=F]{(-0.25)*(x-2)*(x+0.6)+0.2};
\addplot[domain=2.3:3, thick, name path=C]{(-0.2)*(x)*(x-3.1)};
\addplot[domain=3.02:3.8, thick, name path=E]{(-0.2)*(x-6)*(x-2.9)} node[right]{\scriptsize $\beta_{n}^{j}$};
\addplot[domain=3.8:6, thick, name path=F]{(-0.2)*(x-6)*(x-2.9)};



\draw[dashed] (70,40) circle [radius =2.3cm];

\draw[dashed] (800,85) circle [radius =1.8cm];


\addplot[soldot] coordinates{(-4,-0.4)} node[below] {\scriptsize $\displaystyle\gamma\left(0\right)=p_0$};
\addplot[soldot] coordinates{(-1,-0.07)} node[below] {\scriptsize $\displaystyle\gamma\left(\frac{1}{2^{n}}\right)$};
\addplot[mark=none] coordinates{(1,-0.07)} node[below] {\scriptsize $\ldots$};
\addplot[mark=none] coordinates{(1,0.4)} node[below] {\scriptsize $\ldots$};
\addplot[soldot] coordinates{(3,0.09)} node[below] {\scriptsize $\displaystyle\gamma\left(\frac{j-1}{2^{n}}\right)$};
\addplot[soldot] coordinates{(6,0)} node[below left] {\scriptsize $\displaystyle\gamma\left(\frac{j}{2^n}\right)$};

\addplot[mark=none] coordinates{(-3,0.75)} node[below] {\scriptsize $B\left(\gamma(0),\frac{L}{2^{n-1}} \right)$};

\addplot[mark=none] coordinates{(5.5,1)} node[below] {\scriptsize $B\left(\gamma\left(\frac{j-1}{2^n}\right),\frac{L}{2^{n-1}} \right)$};

\end{axis}
\end{tikzpicture}
\caption{Image of $\beta_n^{1,j}$.}
\end{center}
\end{figure}
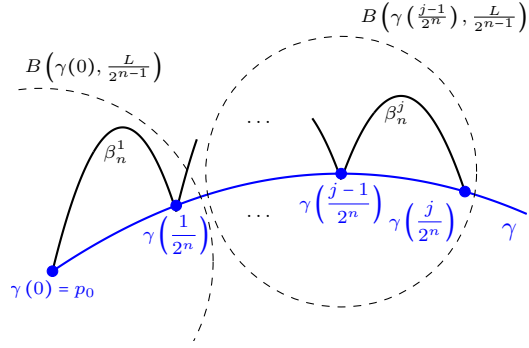



Let $j=1$ and consider the interval $I_{n}^{1}=\left[0,\frac{1}{2^n}\right]$. Since $\Gamma$ is $L$-Lipschitz, we have the following inequality:
\[
\dps\left(\Gamma(0),\Gamma\left( \frac{1}{2^n}\right) \right)\leq L\cdot\left|0-\frac{1}{2^n}\right|=\frac{L}{2^n}.
\]
Recalling the definition of the lifted metric $\dps$ and noting that $\Gamma(0)=[p_0]$, we have the inequality
\[
\inf\left\{~\length{M}(\beta)~:~\beta\in\Gamma\left( \frac{1}{2^n}\right)~\right\}=\dps\left([p_0],\Gamma\left( \frac{1}{2^n}\right) \right)=\dps\left(\Gamma(0),\Gamma\left( \frac{1}{2^n}\right)\right)\leq\frac{L}{2^n}.
\]
Thus, there exists a representative $\beta_n^{1}\in\Gamma\left( \frac{1}{2^n}\right)$ with bounded length, 
\[
 \length{M}\left(\beta_n^1\right)<\frac{L}{2^n}+\frac{L}{2^n}=\frac{L}{2^{n-1}}. 
\]
The representative $\beta_n^{1}:I_{n}^{1}\rightarrow M$ is a path with initial point $\gamma(0)=p_0$ and with endpoint $\gamma\left(\frac{1}{2^n} \right)$.
Since the value $\frac{L}{2^{n-1}}$ is an upper bound for the length of $\beta_n^1$, the image of $\beta_n^{1}$ is a subset of the open ball centered at $\gamma(0)$ of radius $\frac{L}{2^{n-1}}$:
\[
\Ima\left(\beta_n^{1}\right)\subset B\left(\gamma(0),\frac{L}{2^{n-1}} \right).
\]
Set $\beta_n^{1,1}\deq\beta_n^{1}$.

Now, for some $j\in\{1,2,\ldots, 2^{n}-1\}$, suppose  a representative $\beta_n^{1,j}\in\Gamma\left( \frac{j}{2^n}\right)$ exists with domain $\left[0,\frac{j}{2^n} \right]$ 
such that, for $i\in\{1,\ldots,j\}$, the image of the restricted path 
\[
\beta_n^{i}=\beta_n^{1,j}\left|_{I_{n}^{i}}\right.
\]
 is a subset of the open ball $B\left(\gamma\left(\frac{i-1}{2^n}\right), \frac{L}{2^{n-1}} \right)$ and the path $\beta_n^i$ has length less than $\frac{L}{2^{n-1}}$. The endpoint of the path $\beta_n^{1,j}$ is the point $\beta_n^j\left(\frac{j}{2^n}\right)=\gamma\left(\frac{j}{2^n}\right)$.

Consider the subinterval $I_{n}^{j+1}=\left[\frac{j}{2^n},\frac{j+1}{2^n} \right]$. Again, since $\Gamma$ is $L$-Lipschitz, we have the inequality:
\[
\dps\left(\Gamma\left(\frac{j}{2^n}\right),\Gamma\left( \frac{j+1}{2^n}\right) \right)\leq L\cdot\left| \frac{j}{2^n}-\frac{j+1}{2^n}\right|=\frac{L}{2^n}.
\]
Via the definition of the lifted metric $\dps$, we have the inequality
\[
\inf\left\{~\length{M}(\beta)~:~\beta\simeq\overline{\eta_1}*\eta_2\text{ where }  \eta_1\in\Gamma\left(\frac{j}{2^n}\right)\text{ and }\eta_2\in\Gamma\left( \frac{j+1}{2^n}\right)~\right\} \leq \frac{L}{2^n}
\]
Thus, there exists a path $\beta_n^{j+1}:I_{n}^{j+1}\rightarrow M$ with initial point $\gamma\left(\frac{j}{2^n}\right)=\pi\circ\Gamma\left(\frac{j}{2^n}\right)$ and endpoint $\gamma\left(\frac{j+1}{2^n}\right)=\pi\circ\Gamma\left(\frac{j+1}{2^n}\right)$ that has length
\[
 \length{M}\left(\beta_n^{j+1}\right) <\frac{L}{2^n}+\frac{L}{2^n}=\frac{L}{2^{n-1}}.
\]
 So, the image of $\beta_n^{j+1}$ is a subset of the open ball centered at $\gamma\left(\frac{j}{2^n}\right)$ of radius $\frac{L}{2^{n-1}}$:
\[
\Ima(\beta_n^{j+1})\subset B\left(\gamma\left(\frac{j}{2^n}\right),\frac{L}{2^{n-1}} \right).
\]
Moreover, the path $\beta_n^{j+1}$ is homotopic to the reverse of any representative in $\Gamma\left(\frac{j}{2^n}\right)$ concatenated with any representative of $\Gamma\left( \frac{j+1}{2^n}\right)$.  Since the path $\beta_n^{1,j}$ of the inductive hypothesis is a representative of $\Gamma\left(\frac{j}{2^n}\right)$, the Lipschitz path $\beta_n^{1,j+1}:\left[0,\frac{j+1}{2^n}\right]\rightarrow M$ piecewise defined by
\[ \beta_n^{1,j+1}(t)\deq\begin{cases} 
      \beta_n^{1,j}(t) & \text{ if }t\in\left[0,\frac{j}{2^n}\right] \\ \\
     \beta_n^{j+1}(t) & \text{ if }t\in I_{n}^{j+1}      
   \end{cases}
\]
is a representative of $\Gamma\left( \frac{j+1}{2^n}\right)$.

Thus, for each $n\in\NN_0$, there exists a representative $\beta_n\deq\beta_n^{1,{2^n}}\in\Gamma(1)$ 
such that for $j\in\{1,2,\ldots,2^n\}$,
\begin{equation}\label{restricted to elements of Gamma}
\beta_n\left|_{\left[0,\frac{j}{2^{n}}\right]}\right.=\beta_n^{1,j}\in\Gamma\left(\frac{j}{2^n}\right).
\end{equation}
Also, the path $\beta_n$ when restricted to the subinterval $I_{n}^{j}$ is equal to the path 
\begin{equation}\label{restrictions}
\beta_n\left|_{I_{n}^{j}}\right.=\beta_n^j
\end{equation}
where the path $\beta_n^j:I_{n}^{j}\rightarrow M$ has initial point $\beta_n^j\left(\frac{j-1}{2^n}\right)=\gamma\left(\frac{j-1}{2^n}\right)$, endpoint $\beta_n^j\left(\frac{j}{2^n}\right)=\gamma\left(\frac{j}{2^n}\right)$, and bounded length
\begin{equation}\label{length bound on bits}
 \length{M}\left(\beta_n^{j}\right)<\frac{L}{2^{n-1}}.
\end{equation}
Additionally,  the image of the restriction $\beta_n\left|_{I_{n}^{j}}\right.$ is within $\frac{L}{2^{n-1}}$ of the initial point, 
\begin{equation}\label{open ball for N,k}
\Ima\left(\beta_n^j\right)=\Ima\left(\beta_n\left|_{I_{n}^{j}}\right.\right)\subset B\left(\gamma\left(\frac{j-1}{2^n}\right), \frac{L}{2^{n-1}} \right).
\end{equation}


Moreover, for each $n\in\NN_0$ and each $j\in\{1,2,\ldots,2^n\}$, via Theorem~\ref{core result}, take the path $\beta_n^j$ to be the arc length parametrized length minimizing path of the homotopy class  $[\beta_n^j]$. Thus, by construction, the path $\beta_0$ is the length minimizing path in the class $\Gamma(1)$.

As is now shown, the sequence of paths $(\beta_n)$ has a uniform bound on the associated Lipschitz constants.

\begin{lemma}\label{uniform bound on lip for eta_N}
For all $n\in\NN_0$, $\Lip(\beta_n)< 2L$ 
\end{lemma}

\begin{proof}
Let $n\in\NN_0$. For $j\in\{1,2,\ldots,2^n\}$, since the path $\beta_n^j$ is arc length parametrized and has length less than $\frac{L}{2^{n-1}}$ by (\ref{length bound on bits}),
\[
\Lip(\beta_n^j)=\frac{\length{M}(\beta_n^j)}{\frac{j}{2^{n}}-\frac{j-1}{2^{n}}}=2^n\length{M}(\beta_n^j)<2^n\cdot\frac{L}{2^{n-1}}=2L.
\]

Now, $\beta_n\left|_{I_{n}^{j}}\right.=\beta_n^j$ for each $j$ by (\ref{restrictions}). Thus, the Lipschitz constant of the path $\beta_n$ is bounded above by $2L$ as well. 
\end{proof}

As will be argued, the sequence of paths $(\beta_n)$ converges pointwise to the path $\gamma$. First note that, by Lemma~\ref{lift lip bound}, the Lipschitz constant $L_\gamma$ associated to the path $\gamma$ is no larger than the Lipschitz constant associated with the lift $\Gamma$, that is, $L_\gamma\leq L$. 



\begin{lemma}\label{pointwise convergence}
$\beta_n\longrightarrow\gamma$ as $n\rightarrow\infty$.
\end{lemma}
\begin{proof} 
Let $t\in I$. For any $n\in\NN_0$, there exists $j\in\{1,2,\ldots,2^n\}$ such that $t\in I_{n}^{j}=\left[\frac{j-1}{2^n},\frac{j}{2^n}\right]$. By (\ref{open ball for N,k}), $\beta_n(t)\in B\left(\gamma\left(\frac{j-1}{2^n}\right),\frac{L}{2^{n-1}} \right)$.  Thus,
\begin{eqnarray*}
d(\beta_n(t),\gamma(t)) & \leq & d\left(\beta_n(t),\gamma\left(\frac{j-1}{2^n}\right)\right)+d\left(\gamma\left(\frac{j-1}{2^n}\right),\gamma(t)\right) \\
					   & \leq & \left(\frac{L}{2^{n-1}}\right) + L_\gamma\left|\frac{j-1}{2^n}-t\right| \\
					   & \leq & \left(\frac{L}{2^{n-1}}\right)  + L_\gamma\left(\frac{1}{2^n}\right) \\
					   & \leq & \left(\frac{L}{2^{n-1}}\right)  + L\left(\frac{1}{2^n}\right) \\
					   & = & \frac{3L}{2^{n}}.
\end{eqnarray*}
Therefore, $\ds\lim_{n\rightarrow \infty}\beta_n(t)=\gamma(t)$.
\end{proof}

Now, let $n\in\NN_0$. We will further investigate the relationship between the paths $\beta_n$, $\beta_{n+1}$, and $\gamma$, eventually concluding that the image of $\beta_n$ is a subset of the image of $\gamma$.

By (\ref{restricted to elements of Gamma}), for any $j\in\{1,2,\ldots,2^n\}$, the restricted paths $\beta_n\left|_{\left[0,\frac{j}{2^n}\right]}\right.$ and $\beta_{n+1}\left|_{\left[0,\frac{j}{2^n}\right]}\right.$ are representatives in $\Gamma\left(\frac{j}{2^{n}}\right)$. So, for any $j$,
\[
\beta_{n}\left|_{\left[0,\frac{j-1}{2^n}\right]}\right.\simeq\beta_{n+1}\left|_{\left[0,\frac{j-1}{2^n}\right]}\right. 
\text{ and }
\beta_{n}\left|_{\left[0,\frac{j}{2^n}\right]}\right.\simeq\beta_{n+1}\left|_{\left[0,\frac{j}{2^n}\right]}\right. .
\]
Thus, we have the following homotopy equivalences,
\begin{eqnarray*}
\beta_{n+1}\left|_{I_{n}^{j}}\right. & \simeq & \overline{\beta_{n+1}\left|_{\left[0,\frac{j-1}{2^n}\right]}\right.}*\beta_{n+1}\left|_{\left[0,\frac{j}{2^n} \right]}\right.  \\
 & \simeq & \overline{\beta_n\left|_{\left[0,\frac{j-1}{2^n}\right]}\right.}*\beta_{n}\left|_{\left[0,\frac{j}{2^n}\right]}\right. \\
  & \simeq & \beta_n\left|_{I_{n}^{j}}\right. \\
  & = & \beta_n^j.
\end{eqnarray*}

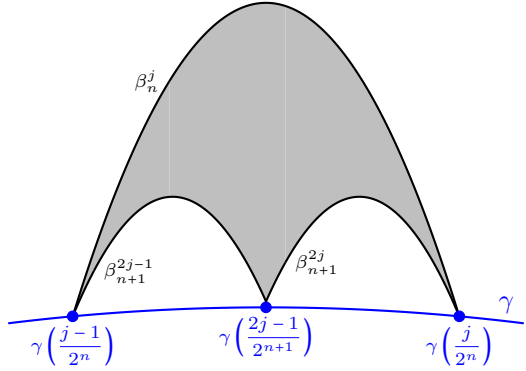
\begin{figure}\label{relationship}
\begin{center}
\begin{tikzpicture}
]
\begin{axis}[
 axis line style={draw=none},
tick style={draw=none},
  ymin=-1, ymax=3.3, ytick=\empty, 
  xmin=-1, xmax=7, xtick=\empty, 
  domain=-1:6,samples=101, 
]

\addplot[domain=-1:7, blue, thick]{(-0.01)*(x)*(x-6)} node[above left]{$\gamma$};
\addplot[domain=0:0.33, thick, name path=A]{(-0.5)*(x)*(x-3.1)} node[right]{\scriptsize $\beta_{n+1}^{2j-1}$};
\addplot[domain=0.33:3.02, thick, name path=C]{(-0.5)*(x)*(x-3.1)};
\addplot[domain=2.98:3.3, thick, name path=E]{(-0.5)*(x-6)*(x-2.9)} node[right]{\scriptsize $\beta_{n+1}^{2j}$};
\addplot[domain=3.3:6, thick, name path=F]{(-0.5)*(x-6)*(x-2.9)};
\addplot[domain=0:1.5, thick, name path=B]{(-0.35)*(x)*(x-6)} node[left]{\scriptsize $\beta_n^j$};
\addplot[domain=1.5:6, thick, name path=D]{(-0.35)*(x)*(x-6)};

\addplot[gray!50] fill between[of=A and B,soft clip={domain=0:0.33}]; 
\addplot[gray!50] fill between[of=C and B,soft clip={domain=0.33:1.5}]; 
\addplot[gray!50] fill between[of=C and D,soft clip={domain=1.5:3}];
\addplot[gray!50] fill between[of=E and D,soft clip={domain=3:3.3}];
\addplot[gray!50] fill between[of=F and D,soft clip={domain=3.3:6}];


\addplot[soldot] coordinates{(0,0)} node[below] {\scriptsize $\displaystyle\gamma\left(\frac{j-1}{2^n}\right)$};
\addplot[soldot] coordinates{(3,0.09)} node[below] {\scriptsize $\displaystyle\gamma\left(\frac{2j-1}{2^{n+1}}\right)$};
\addplot[soldot] coordinates{(6,0)} node[below] {\scriptsize $\displaystyle\gamma\left(\frac{j}{2^n}\right)$};



\end{axis}
\end{tikzpicture}
\caption{$\beta_n^j\simeq \beta_{n+1}\left|_{I_{n}^{j}}\right.$, which is comprised of $\beta_{n+1}^{2j-1}$ and $\beta_{n+1}^{2j}$.}
\end{center}
\end{figure}

So, $\beta_{n+1}\left|_{I_{n}^{j}}\right.\in[\beta_n^j]$. Moreover, since the path $\beta_n^j$ is the length minimizer of the class $[\beta_n^j]$, by Theorem~\ref{core result},
\[
\Ima(\beta_n^j)\subset\Ima\left(\beta_{n+1}\left|_{I_{n}^{j}}\right.\right).
\]
Note $I_{n}^{j}=I_{n+1}^{2j-1}\cup I_{n+1}^{2j}$. Also, by (\ref{restrictions}),
\[
\left(\beta_{n+1}\left|_{I_{n}^{j}}\right.\right)\left|_{I_{n+1}^{2j-1}}\right.=\beta_{n+1}\left|_{I_{n+1}^{2j-1}}\right.=\beta_{n+1}^{2j-1}, 
\]
and
\[
\left(\beta_{n+1}\left|_{I_{n}^{j}}\right.\right)\left|_{I_{n+1}^{2j}}\right.=\beta_{n+1}\left|_{I_{n+1}^{2j}}\right.=\beta_{n+1}^{2j}.
\]
Thus, we have the following relationship between the images of the paths $\beta_n^j$, $\beta_{n+1}^{2j-1}$, and $\beta_{n+1}^{2j}$:
\begin{equation*}\label{subset sequence of cores}
\Ima\left(\beta_n^j\right)\subset\Ima\left(\beta_{n+1}\left|_{I_{n}^{j}}\right.\right)=\Ima\left(\beta_{n+1}^{2j-1}\right)\cup\Ima\left(\beta_{n+1}^{2j}\right).
\end{equation*}
Therefore, by induction, for any $N\geq n$,
\begin{equation}\label{eta N in all eta n}
\Ima\left(\beta_n\right)\subset\bigcup_{j=1}^{2^N}\Ima\left(\beta_N^j\right).
\end{equation}
%
So, by (\ref{open ball for N,k}) and (\ref{eta N in all eta n}) , for all $N\geq n$, the path $\beta_n$ maps into a cover of $\Ima(\gamma)$ by open balls of radius $\frac{L}{2^{N-1}}$:
\[
\Ima\left(\beta_n\right)\subset\bigcup_{j=1}^{2^N}\Ima\left(\beta_N^j\right)\subset \bigcup_{j=1}^{2^N}B\left(\gamma\left(\frac{j-1}{2^N}\right),\frac{L}{2^{N-1}}\right).
\]
As the radii of these open balls limits to $0$ as $N$ goes to infinity, we gather the following result: 
\begin{lemma}\label{each eta maps into gamma}
For each $n\in\NN_0$, $\ds\Ima(\beta_n)\subset \Ima(\gamma)$.
\end{lemma}

Now, for $n\in\NN_0$, consider the path $\beta_n$. Since the paths $\beta_n$ and $\beta_0$ are both representatives of the class $\Gamma(1)$, there is a homotopy $H_n:I\times I\rightarrow M$ from $\beta_n$ to $\beta_0$. Moreover, since $M$ is a tree-like homotopy space, the homotopy $H_n$ can be taken such that $\Lip(H_n)=\Lip(\beta_n)$ and $\Ima(H_n)\subset\Ima(\beta_n)$. So, by Lemma~\ref{uniform bound on lip for eta_N}, $\Lip(H_n)<2L$. Also, by Lemma~\ref{each eta maps into gamma}, $\Ima(H_n)\subset\Ima(\gamma)$. Note that the homotopy $H_n$ guaranteed by Definition~\ref{def TLH} is from $\beta_n$ to $\beta_0$ since the path $\beta_0$ is arc length parametrized and the length minimizing path in the class $\Gamma(1)$.

So, there is a sequence of homotopies $(H_{n})$ that have a uniform bound of $2L$ for the associated Lipschitz constants. Since for all $n\in\NN_0$, the image of $H_n$ is a subset of the compact set $\Ima(\gamma)$, by Arzel\`{a}-Ascoli theorem, there exists a subsequence $(H_{n_{k}})$ such that the maps $H_{n_{k}}$ converge uniformly to a Lipschitz map $H_\infty:I\times I\rightarrow M$.

Now, $H_\infty|_{I\times\{1\}}=\beta_0$ since $H_{n_{k}}|_{I\times\{1\}}=\beta_0$ for all $n_{k}$. Also, since $H_{n_{k}}|_{I\times\{0\}}=\beta_{n_{k}}$ converges pointwise to $\gamma$ (Lemma~\ref{pointwise convergence}), then $H_\infty|_{I\times\{0\}}=\gamma$. Thus, the map $H_\infty$ is a  homotopy from $\gamma$ to $\beta_0$. Therefore, $\gamma\in\Gamma(1)$.

\end{proof}

%
%

%
%
%
%

%

We finish  with a proof of the primary result of this paper. Since purely 2-unrectifiable spaces are tree-like homotopy spaces, Theorem~\ref{main} follows immediately from Corollary~\ref{unique path lifting yields simply connected}.

\begin{corollary}\label{unique path lifting yields simply connected}
Let $M$ be a based tree-like homotopy space. Then endpoint projection $\pi:\ps{M}{d}\rightarrow M$ satisfies the unique lifting property and the universal path space $\ps{M}{d}$ is a Lipschitz simply connected length space.
\end{corollary}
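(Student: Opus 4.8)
The corollary packages together the three main structural results proved earlier in this section for tree-like homotopy spaces, so the proof is essentially a matter of assembling them in the right order. First I would invoke Theorem~\ref{unique path lifting}, which establishes that for a based tree-like homotopy space $(M,p_0)$, the universal path space $\ps{M}{d}$ satisfies the unique path lifting property. The remaining work is to upgrade this from the unique \emph{path} lifting property to the full unique lifting property of Definition~\ref{unique path lifting definition}, and then to extract the simply connected length space conclusion.

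\textbf{Upgrading to the unique lifting property.} The bridge here is Theorem~\ref{unique lifting}. Given any based quasi-convex metric space $(X,x_0)$ and any based Lipschitz map $f:(X,x_0)\rightarrow(M,p_0)$ whose induced homomorphism $f_\#$ is trivial, Theorem~\ref{unique lifting} furnishes a lift $\r{f}:(X,x_0)\rightarrow(\ps{M}{d},[p_0])$, and its final clause asserts that this lift is unique \emph{provided} $\ps{M}{d}$ satisfies the unique path lifting property. Since we have just secured that hypothesis from Theorem~\ref{unique path lifting}, the lift $\r{f}$ exists and is unique. Matching this against Definition~\ref{unique path lifting definition} verbatim shows that $\pi:(\ps{M}{d},[p_0])\rightarrow(M,p_0)$ satisfies the unique lifting property. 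I would be careful to note that the hypotheses of Theorem~\ref{unique lifting} are exactly the ones quantified over in the definition of the unique lifting property, so no gap opens up in transferring between the two.

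\textbf{Deducing the remaining conclusions.} Once the unique lifting property for $\pi$ is in hand, Theorem~\ref{upl implies simply connected} immediately gives that $\ps{M}{d}$ is Lipschitz simply connected. For the length space conclusion, I would first observe, via Corollary~\ref{p2u has metric on path space}, that every point of a tree-like homotopy space supports only trivial local representation, so that by Lemma~\ref{universal path space is a metric space} the lifted pseudo-metric $\dps$ is genuinely a metric; Theorem~\ref{lem:length} then applies to conclude that $\ps{M}{d}$ is a length space. (Indeed Corollary~\ref{p2u has metric on path space} already records both of these facts directly.) Collecting the three pieces completes the proof.

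\textbf{Main obstacle.} Since all the heavy lifting is done in Theorem~\ref{unique path lifting} (the delicate Arzel\`a--Ascoli approximation argument producing the homotopy $H_\infty$), the only real subtlety in the corollary is the logical dependency: Theorem~\ref{unique lifting}'s uniqueness clause is conditional on the unique \emph{path} lifting property, so the argument must establish path lifting first and only then promote it. There is no circularity because Theorem~\ref{unique path lifting} depends solely on the tree-like structure and not on the unique lifting property. Thus the proof is short, and the expository burden is simply to cite the prerequisites in a non-circular order.
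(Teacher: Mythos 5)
Your proposal is correct and follows essentially the same route as the paper's own proof: cite Theorem~\ref{unique path lifting} for unique path lifting, upgrade to the unique lifting property via Theorem~\ref{unique lifting}, deduce Lipschitz simple connectedness from Theorem~\ref{upl implies simply connected}, and obtain the length space conclusion from Corollary~\ref{p2u has metric on path space}. The only difference is cosmetic ordering (the paper states the length space fact first), and your added remarks on non-circularity are a correct reading of the logical dependencies.
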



\begin{proof}
%

By Corollary~\ref{p2u has metric on path space}, the universal path space $\ps{M}{d}$ is a length space. By Theorem~\ref{unique path lifting}, for a tree-like homotopy space $M$, the Lipschitz map $\pi:\ps{M}{d}\rightarrow M$ satisfies the unique path lifting property. Thus, by Theorem~\ref{unique lifting}, the map $\pi$ satisfies the unique lifting property. Therefore, by Theorem~\ref{upl implies simply connected}, the universal path space is Lipschitz simply connected.
\end{proof}

\bibliography{bib}{}
\bibliographystyle{plain}

\end{document}